\documentclass[11pt,a4paper]{article}

\usepackage[T1]{fontenc}
\usepackage[expansion=false]{microtype}
\usepackage[margin=1in]{geometry}
\usepackage{amsmath,amssymb,amsthm}
\usepackage{mathtools}
\usepackage{graphicx}
\usepackage[dvipsnames]{xcolor}
\usepackage{booktabs}
\usepackage{enumitem}
\usepackage{float}
\usepackage[font=small,labelfont=bf,labelsep=period]{caption}
\usepackage{fancyhdr}
\usepackage[colorlinks=true,linkcolor=MidnightBlue,citecolor=MidnightBlue,urlcolor=MidnightBlue]{hyperref}
\usepackage{cleveref}

\theoremstyle{plain}
\newtheorem{theorem}{Theorem}[section]
\newtheorem{proposition}[theorem]{Proposition}
\newtheorem{corollary}[theorem]{Corollary}

\theoremstyle{definition}
\newtheorem{definition}[theorem]{Definition}

\theoremstyle{plain}
\newtheorem*{theoremrestate}{Theorem 1 (restated)}
\newtheorem*{corollaryrestate}{Corollary A.1}

\newcommand{\R}{\mathbb{R}}
\newcommand{\Z}{\mathbb{Z}}

\newcommand{\one}{\mathbf{1}}
\newcommand{\xneg}{x_{\mathrm{neg}}}
\newcommand{\xpos}{x_{\mathrm{pos}}}
\newcommand{\Sminus}{S_{-}}
\newcommand{\Splus}{S_{+}}
\newcommand{\stepout}{\mathrm{step\_out}}
\newcommand{\worstlb}{\mathrm{worst\_lb}}
\DeclareMathOperator{\IDFT}{IDFT}

\DeclareMathOperator{\spn}{span}

\title{%
  \textbf{On the Failure of Step-Response Tests to Certify\\[4pt]
  Admissibility of Spectral Averaging Operators}}

\author{%
  Justin Grieshop\\[2pt]
  \small Marithmetics \texttt{(public-arch/Marithmetics)}}

\date{}

\begin{document}
\maketitle
\thispagestyle{fancy}

\begin{abstract}
Step responses are widely used as a practical diagnostic for whether a linear
smoothing operator ``behaves'' on bounded signals---e.g., whether it preserves
the unit interval when applied to data normalized to~$[0,1]$.  This paper shows
that step-based diagnostics can be fundamentally misleading for periodic,
translation-invariant operators defined by spectral truncation.  We first give a
sharp characterization: a periodic convolution operator that preserves constants
(unit mass) maps $[0,1]^N$ into $[0,1]^N$ if and only if its convolution kernel
is entrywise nonnegative (equivalently, each row of the associated matrix is a
probability vector).  The proof is constructive and yields worst-case witnesses
in $\{0,1\}^N$ along with certified lower bounds on the magnitude of
boundedness violation.  We then analyze three standard Fourier-domain
``averaging'' constructions---Fej\'er (Ces\`aro) averaging, sharp spectral
truncation, and a signed spectral control---and demonstrate a step-response
blind spot near Nyquist: at the cutoff $K = \mathrm{Nyquist}-1$, the canonical
step can exhibit essentially zero overshoot while the operator remains strongly
non-admissible, with guaranteed outputs below~$0$ and above~$1$ for explicit
binary inputs.  All results are reproduced by a self-contained figure-generation
demo available in the project repository; an archival snapshot of the broader
codebase is available via Zenodo
(\href{https://doi.org/10.5281/zenodo.18689854}{doi:10.5281/zenodo.18689854}).

\medskip
\noindent\textbf{Keywords.}\quad
Boundedness preservation; positivity; stochastic matrices; spectral truncation;
Fej\'er kernel; Dirichlet kernel; Gibbs phenomenon; certification; reproducible
research.
\end{abstract}

\bigskip
\tableofcontents
\newpage

\section{Introduction}
\label{sec:intro}

Many numerical and signal-processing pipelines apply linear ``averaging''
operators to signals whose range has physical or semantic meaning---probabilities,
normalized intensities, bounded state variables, or concentrations.  In such
settings, a natural requirement is \emph{interval preservation}:

\medskip
\noindent\textbf{Admissibility (informal).}
If an input vector~$x$ satisfies $0 \le x_i \le 1$ for all~$i$, then the
output~$Tx$ should also satisfy $0 \le (Tx)_i \le 1$ for all~$i$.
\medskip

A common informal certification heuristic is to apply the operator to a step
function and check whether the output exhibits undershoot below~$0$ or overshoot
above~$1$.  Step tests are visually compelling, easy to compute, and closely
associated with well-known phenomena such as Gibbs oscillations.

This paper explains why, in and of itself, a good-looking step response does
\emph{not} certify admissibility for spectral averaging operators---even in the
simplest setting of one-dimensional periodic convolution.  The core reason is
structural: interval preservation on~$[0,1]^N$, together with constant
preservation, is equivalent to entrywise nonnegativity of the convolution kernel.
Step tests probe only one input; they can miss negative kernel mass that is
``activated'' by other binary inputs.  Near Nyquist, this failure can be extreme:
the step can be perfectly reconstructed while explicit witnesses force outputs
outside~$[0,1]$ by a fixed amount.

The presentation is intentionally self-contained and elementary.  The goal is not
to introduce new harmonic analysis, but to formalize a certification boundary
that is often treated as folklore in practice: step overshoot is neither a
necessary nor a sufficient certificate for admissibility.

\subsection{Contributions}
\label{sec:contributions}

\begin{enumerate}[label=(\arabic*),leftmargin=2em]
\item \textbf{Characterization theorem (necessary and sufficient).}
  For periodic convolution operators with unit mass, admissibility on~$[0,1]^N$
  is equivalent to kernel nonnegativity (\Cref{thm:main}).

\item \textbf{Constructive witnesses and certified lower bounds.}
  If any kernel coefficient is negative, we construct explicit binary inputs
  $\xneg, \xpos \in \{0,1\}^N$ that force outputs below~$0$ and above~$1$,
  with magnitudes determined by the negative coefficient sum
  (\Cref{cor:violation}).

\item \textbf{Step-response blind spot (``cloaking'' mechanism).}
  For sharp and signed spectral cutoffs at $K = \mathrm{Nyquist}-1$ (with~$N$
  divisible by~$4$), the canonical half-step has zero Nyquist component and
  therefore exhibits essentially zero step violation, even though the kernel has
  substantial negative mass and the operator is provably non-admissible
  (\Cref{sec:blind-spot}).

\item \textbf{Reproducible numeric certificate.}
  We provide a deterministic sweep over~$K$ that separates Fej\'er averaging
  (admissible) from sharp and signed truncations (non-admissible for all
  $K < \mathrm{Nyquist}$), and we visualize the gap between observed step
  violations and guaranteed worst-case violations
  (Figures~\ref{fig:hero}--\ref{fig:obs-vs-guar}).
\end{enumerate}

\begin{figure}[t]
  \centering
  \includegraphics[width=\textwidth]{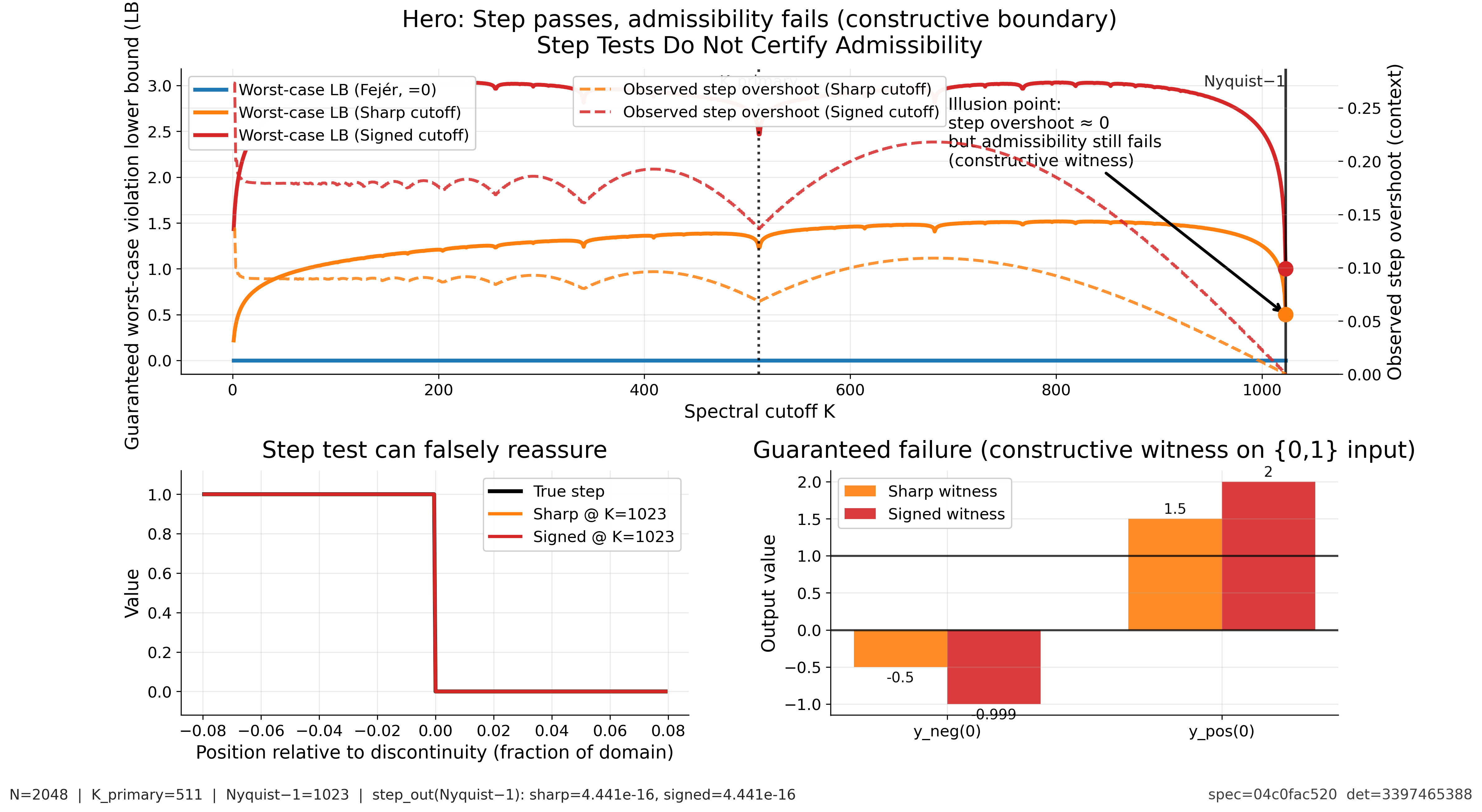}
  \caption{Hero summary.  Observed step-test violation versus spectral cutoff~$K$,
  compared against the certified worst-case lower bound obtained from the
  constructive witness.  The plot highlights a near-Nyquist ``blind spot'' where
  step overshoot vanishes while admissibility fails by a fixed amount.}
  \label{fig:hero}
\end{figure}

\subsection{Scope and standing assumptions}
\label{sec:scope}

We work in the discrete periodic setting~$\Z_N$ with~$N$ even, and focus on
linear time-invariant operators realized as circular convolution with a real
kernel $h \in \R^N$.  The results extend immediately to any row-stochastic
matrix (not necessarily circulant), but the periodic convolution case makes the
spectral constructions transparent.

\section{Setting, Notation, and the Admissibility Requirement}
\label{sec:setting}

Let $N \ge 2$ and identify signals with vectors $x \in \R^N$, indexed by
$n \in \{0,1,\dots,N-1\}$.  For a kernel $h \in \R^N$, define the periodic
convolution operator $T_h : \R^N \to \R^N$ by
\begin{equation}\label{eq:conv}
  (T_h x)[n]
  \;=\;
  \sum_{m=0}^{N-1} h[m]\; x\bigl[n - m \;\;(\mathrm{mod}\; N)\bigr].
\end{equation}
Equivalently, $T_h$ is a circulant matrix whose rows are cyclic shifts of~$h$.

We single out a standard normalization:

\begin{definition}[Unit mass / constant preservation]
\label{def:unit-mass}
The operator~$T_h$ \emph{preserves constants} if $T_h \one = \one$, where
$\one = (1,\dots,1)^\top$.  For convolution this is equivalent to
\[
  \sum_{m=0}^{N-1} h[m] \;=\; 1.
\]
We refer to this condition as the \emph{row-sum condition}.
\end{definition}

\begin{definition}[Admissibility on the unit interval]
\label{def:admissibility}
A linear operator $T : \R^N \to \R^N$ is \emph{admissible} if
\begin{enumerate}[label=(\roman*)]
  \item $T\one = \one$, and
  \item $x \in [0,1]^N \;\Longrightarrow\; Tx \in [0,1]^N$.
\end{enumerate}
\end{definition}

\noindent
The unit interval~$[0,1]$ is chosen for convenience; by affine rescaling the
same discussion applies to any bounded interval~$[a,b]$.

\section{A Characterization of Admissibility for Periodic Convolution}
\label{sec:characterization}

We now state the central theorem.  It isolates what step tests cannot: interval
preservation is a positivity property of the coefficients, not a
frequency-domain heuristic.

\begin{theorem}[Admissibility $\Leftrightarrow$ kernel nonnegativity for periodic convolution]
\label{thm:main}
Let $h \in \R^N$ define a periodic convolution operator~$T_h$.  Assume the
row-sum condition $\sum_{m} h[m] = 1$.  Then the following are equivalent:
\begin{enumerate}[label=(\roman*)]
  \item $T_h$ is admissible on~$[0,1]^N$, i.e.,
        $x \in [0,1]^N \Rightarrow T_h x \in [0,1]^N$.
  \item $h[m] \ge 0$ for all~$m$ (equivalently, every row of the matrix
        for~$T_h$ has nonnegative coefficients).
\end{enumerate}
\end{theorem}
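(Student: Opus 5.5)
The plan is to prove the two implications separately. The direction (ii)$\Rightarrow$(i) is a convexity argument. The direction (i)$\Rightarrow$(ii) is by contrapositive, using an explicit binary witness. That witness is what the later corollary on certified lower bounds should reuse.

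\emph{Sufficiency.} Assume $h[m]\ge 0$ for all $m$ and $\sum_m h[m]=1$. Fix $x\in[0,1]^N$ and an index $n$. By \eqref{eq:conv}, $(T_h x)[n]$ is a convex combination of the entries $x[n-m \bmod N]$, with weights $h[m]$. Each of these entries lies in $[0,1]$, so the combination does too. Explicitly,
\[
0 \le \sum_m h[m]\,x[n-m] \le \sum_m h[m] = 1.
\]
The row-sum condition also gives $T_h\one=\one$, so both requirements of \Cref{def:admissibility} hold.

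\emph{Necessity.} Suppose some coefficient $h[m_0]<0$. Let $\Sminus=\{m: h[m]<0\}$, which is nonempty. Fix the output index $n=0$, and define $\xneg\in\{0,1\}^N$ by $\xneg[-m \bmod N]=1$ for $m\in\Sminus$, with all other entries $0$. The map $m\mapsto -m \bmod N$ is a bijection of $\Z_N$, so this selects exactly the negative coefficients. Then
\[
(T_h\xneg)[0]=\sum_{m\in\Sminus}h[m]<0,
\]
which violates (i).

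A symmetric witness gives a violation above $1$. Set $\xpos=\one-\xneg$, so $\xpos$ selects exactly the nonnegative coefficients. Using the row-sum condition,
\[
(T_h\xpos)[0]=1-\sum_{m\in\Sminus}h[m]>1.
\]
Either witness alone contradicts (i); providing both yields the two-sided violation bounds stated in the abstract.

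The only delicate step is the indexing in the necessity argument. One must check that the convolution convention $x[n-m]$ makes the entry at position $-m \bmod N$ pair with $h[m]$, and that translation invariance allows fixing $n=0$ without loss of generality. Everything else is immediate.
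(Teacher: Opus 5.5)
Your proposal is correct and follows the paper's proof essentially step for step: the convex-combination argument for (ii)$\Rightarrow$(i), and for the converse the binary witness $\xneg$ supported on the negative coefficients of the row producing output index $0$. The only variation is that you take $\xpos=\one-\xneg$ and use linearity with $T_h\one=\one$, whereas the paper selects the strictly positive coefficients directly; since zero coefficients contribute nothing, both witnesses give the same output value $1-\sum_{m:\,h[m]<0}h[m]>1$.
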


\begin{proof}
\textbf{(ii)~$\Rightarrow$~(i).}\;
Fix~$n$.  If $h[m] \ge 0$ and $\sum_m h[m] = 1$, then
$(T_h x)[n] = \sum_m h[m]\,x[n-m]$ is a convex combination of values
$x[\cdot] \in [0,1]$.  Hence $(T_h x)[n] \in [0,1]$ for every~$n$, so
$T_h x \in [0,1]^N$.

\medskip\noindent
\textbf{(i)~$\Rightarrow$~(ii).}\;
Suppose, to the contrary, that some coefficient is negative.  Because~$T_h$ is
circulant, it suffices to inspect one row (say the row producing output
index~$n = 0$); the same argument applies to any row for general matrices.  Let
$a[j]$ denote the coefficients in that row, so
$(T_h x)[0] = \sum_{j=0}^{N-1} a[j]\,x[j]$ and $\sum_j a[j] = 1$.

Define a binary ``negative-support'' witness $\xneg \in \{0,1\}^N$ by
\[
  \xneg[j] \;=\;
  \begin{cases}
    1, & a[j] < 0,\\
    0, & a[j] \ge 0.
  \end{cases}
\]
Then
\[
  (T_h \xneg)[0]
  \;=\; \sum_{j:\,a[j]<0} a[j]
  \;=:\; \Sminus.
\]
Since at least one $a[j]$ is negative, we have $\Sminus < 0$, hence
$(T_h \xneg)[0] < 0$, contradicting admissibility.

Similarly define $\xpos \in \{0,1\}^N$ by
\[
  \xpos[j] \;=\;
  \begin{cases}
    1, & a[j] > 0,\\
    0, & a[j] \le 0.
  \end{cases}
\]
Then
\[
  (T_h \xpos)[0]
  \;=\; \sum_{j:\,a[j]>0} a[j]
  \;=\; 1 - \Sminus > 1,
\]
again contradicting admissibility.  Therefore no coefficient can be negative, so
$h[m] \ge 0$ for all~$m$.
\end{proof}

\begin{corollary}[Certified worst-case violation from negative mass]
\label{cor:violation}
Under the assumptions of \Cref{thm:main}, define the negative coefficient sum in
a row by
\[
  \Sminus \;:=\; \sum_{j:\,a[j]<0} a[j].
\]
If $\Sminus < 0$, then there exists $x \in \{0,1\}^N$ such that
\[
  \min_n (T_h x)[n] \;\le\; \Sminus
  \qquad\text{and}\qquad
  \max_n (T_h x)[n] \;\ge\; 1 - \Sminus = 1 + |\Sminus|.
\]
Moreover, $\xneg$ and $\xpos$ constructed in the proof achieve these violations
at the corresponding row index.
\end{corollary}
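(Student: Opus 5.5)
The corollary follows by reusing the two witnesses from the proof of \Cref{thm:main}, this time recording the values they produce rather than only their signs. Fix the row used there: the row producing output index $n=0$, with coefficients $a[j]$ such that $(T_h x)[0]=\sum_j a[j]\,x[j]$ and $\sum_j a[j]=1$. Because $T_h$ is circulant, every row has the same multiset of coefficients. So $\Sminus$ does not depend on the chosen row, and any row could be used after a cyclic shift of the witness.

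\textbf{Lower bound.} With $\xneg$ defined as in the proof of \Cref{thm:main}, we already have $(T_h\xneg)[0]=\Sminus$. Since a minimum over all indices is at most the value at one index, $\min_n (T_h\xneg)[n]\le (T_h\xneg)[0]=\Sminus$.

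\textbf{Upper bound.} With $\xpos$ as in the proof, $(T_h\xpos)[0]=\sum_{a[j]>0}a[j]$. Splitting the row-sum condition into positive, zero and negative parts gives $1=\sum_{a[j]>0}a[j]+\Sminus$, so $(T_h\xpos)[0]=1-\Sminus$. Because $\Sminus<0$, this equals $1+|\Sminus|$. Hence $\max_n (T_h\xpos)[n]\ge 1-\Sminus$.

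\textbf{Reading of ``there exists $x$''.} The statement asks for a single $x$ satisfying both inequalities, which is stronger than having two separate witnesses, so I would settle this reading first. The final sentence of the corollary names two witnesses, $\xneg$ and $\xpos$, which suggests the intended meaning is two binary inputs, one per inequality. I would state this interpretation explicitly and prove exactly that.

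If instead a single $x$ were genuinely required, one input must drive one output index to $\Sminus$ and a different index to $1-\Sminus$ simultaneously, and I do not see how to guarantee this in general. For example, when $N$ is very small the supports of the two shifted witnesses can conflict. I would not attempt that stronger claim. The rest is bookkeeping, and the only substantive content is the identity $\sum_{a[j]>0}a[j]=1-\Sminus$ that comes from unit mass.
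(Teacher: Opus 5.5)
Your proof is correct and is essentially the paper's own argument: the appendix restatement (Corollary A.1) asserts exactly two binary witnesses $\xneg,\xpos$ and obtains the bounds by direct substitution plus the mass constraint, as you do, and your equality $(T_h\xpos)[0]=1-\Sminus$ is slightly cleaner than the paper's $\ge$. Your refusal of the single-$x$ reading is not just caution, because that reading is false: for $N=4$ and the paper's own Nyquist kernel $h=(3/4,1/4,-1/4,1/4)$ (so $\Sminus=-1/4$), one has $(T_hx)[n]=x[n]-\tfrac14(-1)^n\sum_j(-1)^jx[j]$, and a short parity check shows that no $x\in\{0,1\}^4$ produces both an output $\le -1/4$ and an output $\ge 5/4$.
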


\noindent
This corollary is the basis of our ``guaranteed'' curves in
Figures~\ref{fig:hero} and~\ref{fig:obs-vs-guar}: once $\Sminus < 0$ is
detected, admissibility is not just false---it fails by at least~$|\Sminus|$ in
a worst case that is explicitly constructible.

\begin{figure}[t]
  \centering
  \includegraphics[width=\textwidth]{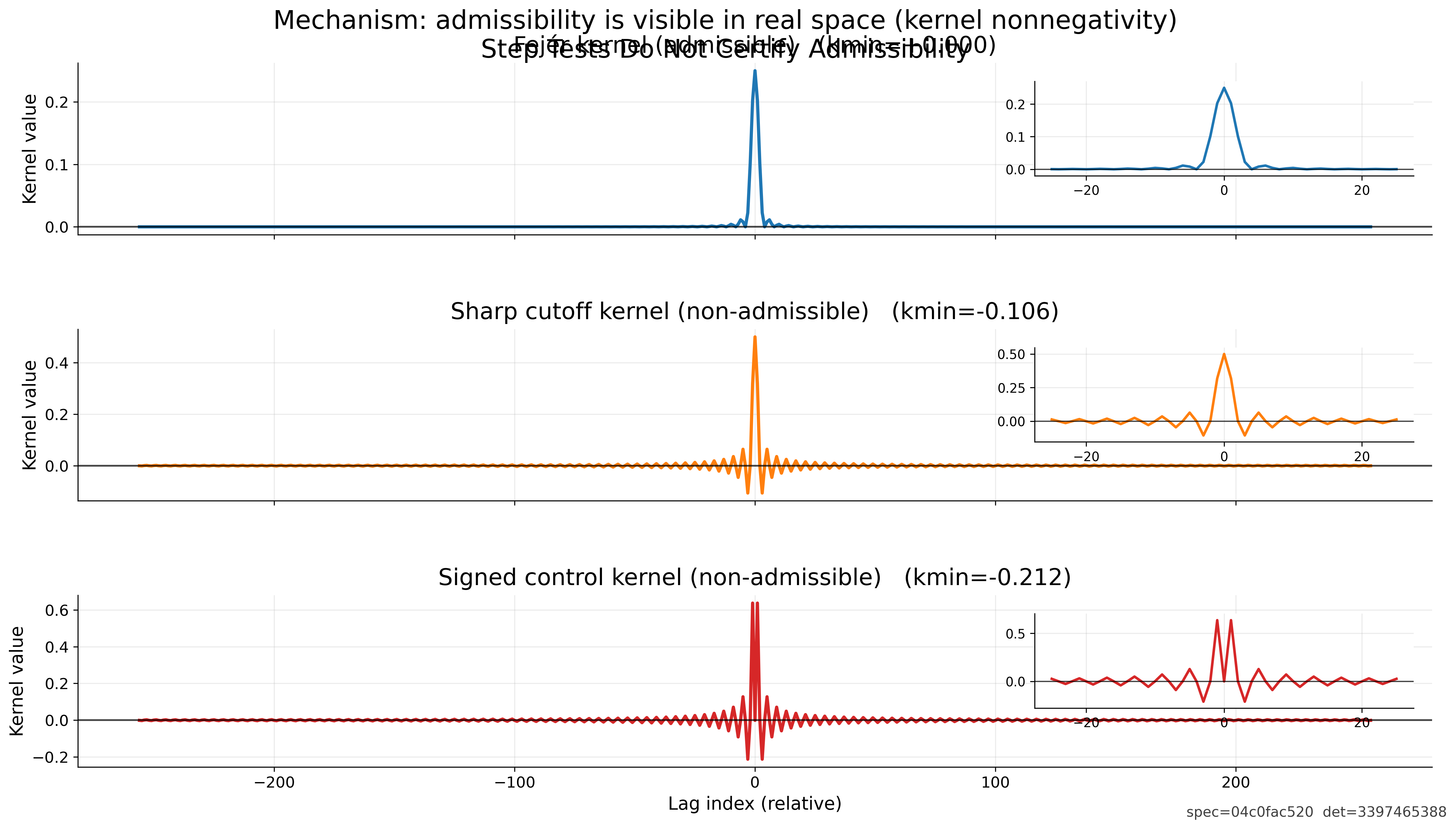}
  \caption{Kernel triptych for three spectral averaging constructions at
  representative cutoffs~$K$.  Fej\'er averaging yields a nonnegative kernel
  (admissible).  Sharp and signed spectral cutoffs produce oscillatory kernels
  with negative coefficients (non-admissible), even when their step responses
  appear visually well-behaved.}
  \label{fig:kernel-triptych}
\end{figure}

\begin{figure}[t]
  \centering
  \includegraphics[width=\textwidth]{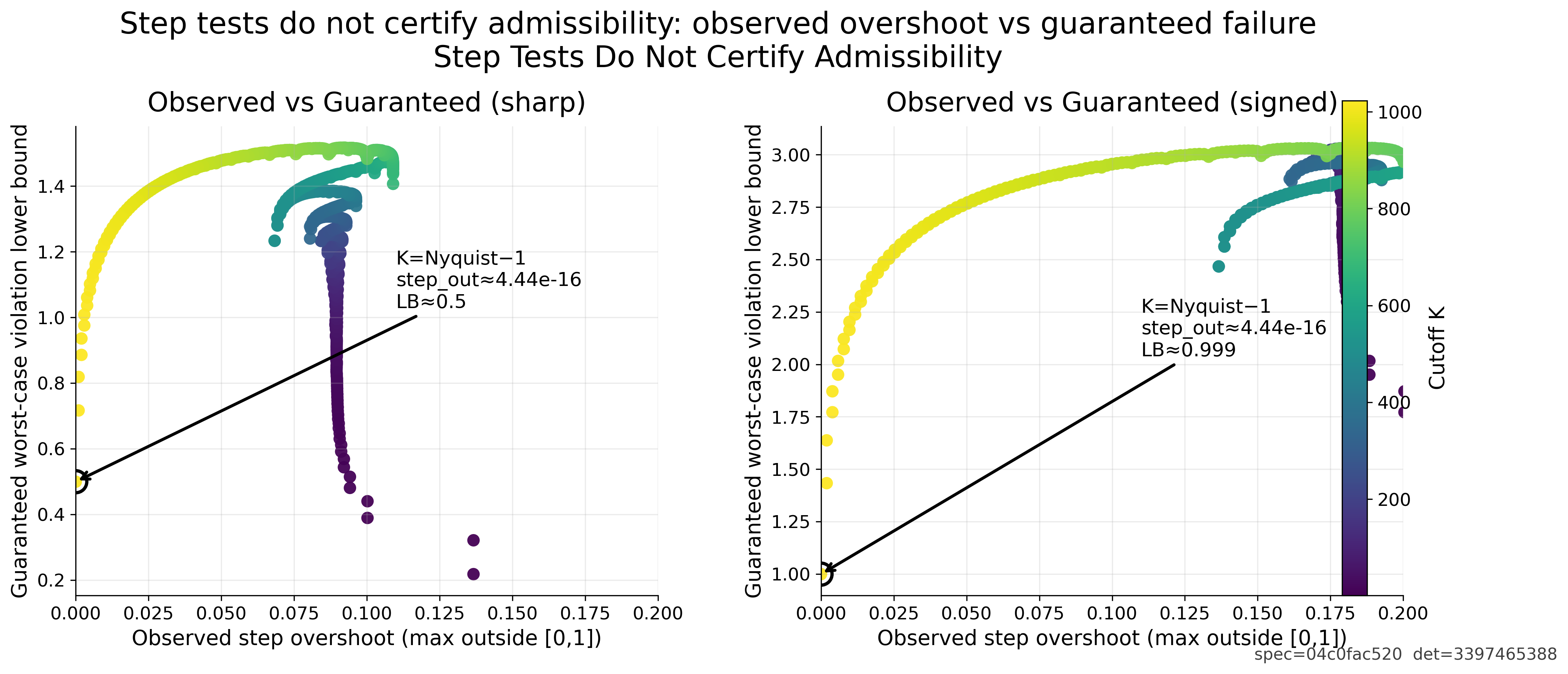}
  \caption{Observed step-test violation versus certified worst-case violation
  across a dense cutoff sweep.  The near-Nyquist region shows a pronounced gap:
  step overshoot can be near zero while the guaranteed violation remains bounded
  away from zero.}
  \label{fig:obs-vs-guar}
\end{figure}

\section{Spectral Averaging Operators and a Step-Response Blind Spot}
\label{sec:spectral}

We now connect \Cref{thm:main} to common Fourier-domain constructions.  Let
$\widehat{x}$ denote the discrete Fourier transform (DFT) of~$x$, with inverse
transform producing a circular convolution kernel~$h$ from a real frequency
response~$H$.  In this setting, a spectral ``averaging'' operator is often
implemented as
\[
  \widehat{y}[k] = H[k]\;\widehat{x}[k],
  \qquad
  y = T_h x,
  \qquad
  h = \IDFT(H),
\]
where $H[0] = 1$ enforces constant preservation $\sum_m h[m] = 1$.

We consider three canonical multipliers, parameterized by a cutoff~$K$ with
$0 \le K \le \mathrm{Nyquist}-1 = \tfrac{N}{2}-1$:

\begin{enumerate}[label=(\arabic*),leftmargin=2em]
  \item \textbf{Sharp truncation:}\;
    $H_{\mathrm{sharp}}[k] = 1$ for $|k| \le K$ and $0$ otherwise.
  \item \textbf{Fej\'er (Ces\`aro) averaging:}\;
    $H_{\mathrm{fejer}}[k] = \max\{0,\; 1 - |k|/(K+1)\}$.
  \item \textbf{Signed spectral control:}\;
    $H_{\mathrm{signed}}[k] = +1$ for $|k| \le K$ and $-1$ otherwise.
\end{enumerate}

\Cref{thm:main} immediately implies: if any resulting kernel has a negative
entry, the operator is not admissible---regardless of what its response to a
particular test signal looks like.  Fej\'er averaging is designed to avoid this
pathology: its kernel is nonnegative, hence admissible.  In contrast, sharp
truncation produces the discrete Dirichlet kernel, which oscillates and assumes
negative values for any nontrivial cutoff.  The signed construction amplifies
this effect.

A particularly instructive case is the ``near Nyquist'' cutoff
$K = \tfrac{N}{2}-1$.  At this cutoff, sharp truncation removes only the
Nyquist bin, and the signed construction differs from the identity only by
flipping the Nyquist bin.  One might expect these operators to behave essentially
like the identity on simple tests.  Indeed, the step response can be essentially
perfect.  However, the kernels remain non-admissible, and the failure is
quantifiable in closed form (\Cref{sec:near-nyquist}).

\subsection{A near-Nyquist blind spot for step-response testing}
\label{sec:near-nyquist}

The most visually persuasive failure mode occurs at the cutoff just below
Nyquist.  In this regime, the step response can look essentially perfect---yet
the operator remains provably non-admissible, with a fixed, certified violation
magnitude.

Throughout this section assume~$N$ is even and let the Nyquist index be
$k_{\mathrm{N}} := N/2$.  Consider the cutoff
\[
  K \;=\; k_{\mathrm{N}} - 1 \;=\; \frac{N}{2} - 1,
\]
so that ``sharp truncation'' removes only the Nyquist bin.

\begin{proposition}[Closed-form kernels at $K = \mathrm{Nyquist}-1$]
\label{prop:closed-form}
Let $T_{\mathrm{sharp}}$ be the sharp spectral truncation operator with
multiplier
\[
  H_{\mathrm{sharp}}[k] \;=\;
  \begin{cases}
    1, & k \neq k_{\mathrm{N}},\\
    0, & k = k_{\mathrm{N}}.
  \end{cases}
\]
Let $T_{\mathrm{signed}}$ be the signed spectral control with multiplier
\[
  H_{\mathrm{signed}}[k] \;=\;
  \begin{cases}
    1, & k \neq k_{\mathrm{N}},\\
    -1, & k = k_{\mathrm{N}}.
  \end{cases}
\]
Then both are periodic convolutions~$T_h$ with kernels
\[
  h_{\mathrm{sharp}}[n] = \delta[n] - \frac{1}{N}(-1)^n,
  \qquad
  h_{\mathrm{signed}}[n] = \delta[n] - \frac{2}{N}(-1)^n,
\]
where $\delta[0] = 1$ and $\delta[n] = 0$ for $n \neq 0$.

Moreover, for each kernel the negative coefficient sum~$\Sminus$ (in any row) is
\[
  \Sminus^{\mathrm{sharp}}
  = -\frac{(N/2)-1}{N}
  = -\Bigl(\frac{1}{2} - \frac{1}{N}\Bigr),
  \qquad
  \Sminus^{\mathrm{signed}}
  = -\frac{2\bigl((N/2)-1\bigr)}{N}
  = -\Bigl(1 - \frac{2}{N}\Bigr).
\]
\end{proposition}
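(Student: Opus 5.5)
The plan is to compute the kernels directly from the inverse DFT by linearity, and then read off $\Sminus$ by counting signs. I use the convention implicit in the paper, $h[n] = \frac{1}{N}\sum_{k=0}^{N-1} H[k]\, e^{2\pi i kn/N}$. This is the convention under which $H[0]=1$ is equivalent to $\sum_m h[m]=1$.

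\textbf{Step 1 (kernels).} Write $H = \one - c\, e_{k_{\mathrm N}}$, where $c = 1$ for the sharp multiplier and $c = 2$ for the signed one, and $e_{k_{\mathrm N}}$ is the indicator of the Nyquist bin. The inverse DFT of the all-ones multiplier is $\frac1N\sum_k e^{2\pi i kn/N} = \delta[n]$. The inverse DFT of $e_{k_{\mathrm N}}$ is $\frac1N e^{\pi i n} = \frac1N(-1)^n$. This is real because the Nyquist bin is its own conjugate partner, so $h$ is a real kernel. Linearity then gives $h = \delta - \frac{c}{N}(-1)^n$, which is the stated formula for both operators. Since $H[0]=1$ in both cases, the row-sum condition holds, so \Cref{thm:main} and \Cref{cor:violation} apply.

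\textbf{Step 2 (sign pattern and $\Sminus$).} I would split the indices into three classes.
\begin{itemize}
  \item At $n=0$: $h[0] = 1 - c/N \ge 0$, since $N\ge 2$ and $c\le 2$.
  \item At odd $n$: $h[n] = +c/N > 0$.
  \item At even $n \neq 0$: $h[n] = -c/N < 0$.
\end{itemize}
There are exactly $N/2 - 1$ indices in the last class. Hence
\[
  \Sminus = -\frac{c\,\bigl((N/2)-1\bigr)}{N},
\]
which gives the two stated values for $c=1,2$. For the phrase ``in any row'': $T_h$ is circulant, so every row is a cyclic shift (up to index reflection) of $h$. The multiset of coefficients is therefore the same in every row, and so is $\Sminus$.

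\textbf{Main obstacle.} There is no real analytic difficulty; the care needed is bookkeeping. First, the $1/N$ normalization must be placed on the inverse transform. Second, the entry at $n=0$ must not be counted among the negative entries; in the edge case $N=2$, $c=2$ it equals zero, consistent with the strict inequality $a[j]<0$ in the definition of $\Sminus$. Third, one should check that reflecting the index in a circulant row does not change the count, since $n \mapsto -n \bmod N$ preserves parity when $N$ is even.
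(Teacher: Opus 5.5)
Your proposal is correct and follows the paper's proof essentially verbatim. Like the paper, you write each multiplier as the all-pass multiplier minus $c$ times the Nyquist indicator, invert by linearity to get $\delta[n]-\frac{c}{N}(-1)^n$, and count the $(N/2)-1$ nonzero even lags that carry $-c/N$. Your extra bookkeeping is sound and fills in details the paper leaves implicit: placing $1/N$ on the inverse transform so that $H[0]=1$ is equivalent to unit mass, the $N=2$, $c=2$ edge case where $h[0]=0$, and the fact that $n\mapsto -n \bmod N$ preserves parity, which justifies the ``any row'' claim.
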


\begin{proof}
The identity operator has multiplier $H_{\mathrm{id}}[k] \equiv 1$, whose
inverse DFT is~$\delta[n]$.  The difference between~$H_{\mathrm{id}}$ and
$H_{\mathrm{sharp}}$ is the indicator at the Nyquist bin, so
$h_{\mathrm{sharp}} = \delta - \IDFT(\mathbf{1}_{\{k_{\mathrm{N}}\}})$.  The
inverse DFT of a unit impulse at~$k_{\mathrm{N}}$ is $\frac{1}{N}(-1)^n$,
giving the first formula.  The signed construction flips the Nyquist bin from
$+1$ to~$-1$, i.e., subtracts $2\mathbf{1}_{\{k_{\mathrm{N}}\}}$, yielding
$h_{\mathrm{signed}} = \delta - \frac{2}{N}(-1)^n$.

For~$h_{\mathrm{sharp}}$, all even indices $n \neq 0$ have coefficient~$-1/N$,
while odd indices have~$+1/N$, and $h_{\mathrm{sharp}}[0] = 1 - 1/N > 0$.
Thus the negative coefficients occur exactly at the $(N/2)-1$ even indices
$n \in \{2,4,\dots,N-2\}$, and their sum is $-((N/2)-1)/N$.  The signed case is
identical with~$-2/N$ on those even indices.
\end{proof}

\begin{corollary}[Guaranteed violation magnitude at $K = \mathrm{Nyquist}-1$]
\label{cor:nyquist}
Both operators are non-admissible for every even $N \ge 4$.  In particular, by
\Cref{cor:violation} there exist binary inputs
$\xneg, \xpos \in \{0,1\}^N$ such that, at some output index,
\[
  (T_{\mathrm{sharp}}\xneg) < 0,
  \quad
  (T_{\mathrm{sharp}}\xpos) > 1,
  \qquad
  (T_{\mathrm{signed}}\xneg) < 0,
  \quad
  (T_{\mathrm{signed}}\xpos) > 1,
\]
with certified bounds
\[
  \min(T_{\mathrm{sharp}}x) \le -\Bigl(\tfrac{1}{2}-\tfrac{1}{N}\Bigr),
  \quad
  \max(T_{\mathrm{sharp}}x) \ge 1+\Bigl(\tfrac{1}{2}-\tfrac{1}{N}\Bigr),
\]
and
\[
  \min(T_{\mathrm{signed}}x) \le -\Bigl(1-\tfrac{2}{N}\Bigr),
  \quad
  \max(T_{\mathrm{signed}}x) \ge 2-\tfrac{2}{N}.
\]
These bounds are not asymptotic; they hold exactly for the constructed witnesses.
\end{corollary}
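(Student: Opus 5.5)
The plan is to combine \Cref{prop:closed-form} with \Cref{cor:violation}, after checking that the hypotheses of \Cref{thm:main} hold. Both multipliers satisfy $H[0]=1$, since the Nyquist index $k_{\mathrm{N}}=N/2\ge 2$ differs from $0$ when $N\ge 4$. Hence both kernels have unit mass. This can also be seen directly from the closed forms: $\sum_n (-1)^n = 0$ for even $N$, so $\sum_n h[n] = 1$ in both cases.

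Next, I would show that each kernel has a negative entry. By \Cref{prop:closed-form}, at every nonzero even index $n\in\{2,\dots,N-2\}$ the sharp kernel equals $-1/N$ and the signed kernel equals $-2/N$. This index set is nonempty exactly when $N\ge 4$, which is why that hypothesis is needed. The case $N=2$ must be excluded: there $h_{\mathrm{sharp}}=(1/2,1/2)$ is admissible. Given a negative entry, \Cref{thm:main} (i)$\Rightarrow$(ii) yields non-admissibility of both operators.

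For the quantitative bounds, I would fix the row producing output index $0$. By the circulant structure, its coefficients are $a[j]=h[-j \bmod N]$. Since $N$ is even, $-j$ and $j$ have the same parity, so the negative coefficients in this row sit at the same even indices as in $h$. Therefore $\Sminus$ equals the value computed in \Cref{prop:closed-form}. Then \Cref{cor:violation} gives $(T\xneg)[0]=\Sminus$ and $(T\xpos)[0]=1-\Sminus$, which are exactly the stated bounds:
\begin{itemize}
  \item for the sharp kernel, $-(1/2-1/N)$ and $3/2-1/N$;
  \item for the signed kernel, $-(1-2/N)$ and $2-2/N$.
\end{itemize}
All four violations are strict because $\Sminus<0$. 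The sign of $\min$ and $\max$ over all indices follows because the value at index $0$ already attains the bound.

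As a sanity check, I would verify the witnesses directly for the signed kernel. Here $\xneg$ is the indicator of the nonzero even indices, and $T\xneg = \xneg - \tfrac{2}{N}\bigl(\sum_j (-1)^j\xneg[j]\bigr)(-1)^n$ evaluated at $n=0$. This gives $0-\tfrac{2}{N}(N/2-1)$, matching the claim.

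The main obstacle is only bookkeeping. One must make sure the row used in \Cref{cor:violation} has the same negative mass as $h$ itself, which relies on the parity-preserving reflection $j\mapsto -j$ for even $N$. One must also handle the degenerate case $N=2$ cleanly.
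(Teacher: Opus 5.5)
Your proposal is correct and takes the same route as the paper: read off $\Sminus$ from \Cref{prop:closed-form} and apply \Cref{cor:violation}. Your unit-mass check, the exclusion of $N=2$ (where both kernels are nonnegative and hence admissible), and the direct check of the signed witness are all sound bookkeeping. One minor overstatement is that the parity argument is not needed to match the row's negative mass to $\Sminus(h)$: every row of a circulant matrix is a permutation of $h$ (and here $h$ is even in addition), so its negative mass is always $\Sminus(h)$.
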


\subsubsection{Why the step test can look ``perfect'' anyway}
\label{sec:blind-spot}

Define the canonical half-step (periodic) input
\[
  x_{\mathrm{step}}[n] \;=\;
  \begin{cases}
    0, & 0 \le n \le \tfrac{N}{2}-1,\\
    1, & \tfrac{N}{2} \le n \le N-1.
  \end{cases}
\]
The Nyquist Fourier mode in the time domain is the alternating sequence~$(-1)^n$.
The Nyquist coefficient of~$x_{\mathrm{step}}$ is proportional to the inner
product $\sum_{n=0}^{N-1} x_{\mathrm{step}}[n](-1)^n$.  Because
$x_{\mathrm{step}}$ is supported on exactly~$N/2$ consecutive indices, and
because those~$N/2$ indices contain equally many even and odd integers
when~$N/2$ is even (as in the standard power-of-two setting), we have the exact
cancellation
\[
  \sum_{n=0}^{N-1} x_{\mathrm{step}}[n](-1)^n
  \;=\;
  \sum_{n=N/2}^{N-1} (-1)^n
  \;=\; 0.
\]
Consequently, removing or flipping only the Nyquist bin has no effect on
$x_{\mathrm{step}}$:
\[
  T_{\mathrm{sharp}}\,x_{\mathrm{step}} = x_{\mathrm{step}},
  \qquad
  T_{\mathrm{signed}}\,x_{\mathrm{step}} = x_{\mathrm{step}},
\]
up to roundoff in floating arithmetic.  This creates a \emph{step-response blind
spot} (a ``cloaking effect''): the input fails to excite precisely the frequency
component where these operators differ from the identity, even though their
real-space kernels contain substantial negative mass.  \Cref{fig:hero} highlights
this phenomenon at $K = \mathrm{Nyquist}-1$, where the observed step violation
can be essentially zero while the guaranteed worst-case violation remains
near~$1/2$ (sharp) or near~$1$ (signed).

The conclusion is conceptual and general: step-response testing probes a single
direction in~$[0,1]^N$.  Admissibility, by contrast, is a uniform guarantee over
the entire hypercube, and \Cref{thm:main} shows it is governed by
coefficient-wise positivity---something a single input cannot certify.

\subsection{Quantitative separation between step overshoot and guaranteed failure}
\label{sec:quantitative}

The preceding section explains why step-based tests can miss violations: the step
does not necessarily excite the parts of the kernel responsible for
inadmissibility.  We now quantify this mismatch by sweeping the spectral cutoff
and comparing two diagnostics:

\begin{enumerate}[label=\arabic*.,leftmargin=2em]
  \item \textbf{Observed step overshoot} (a step-response diagnostic).
    Let $x_{\mathrm{step}}$ be the $0$--$1$ step input on the discrete periodic
    grid.  For an operator~$T$ with impulse response~$h$, let
    $y_{\mathrm{step}} = T x_{\mathrm{step}}$.  Define
    \[
      \stepout(T)
      \;:=\;
      \max\bigl\{0,\;\;-\!\min_n y_{\mathrm{step}}[n],\;\;
        \max_n y_{\mathrm{step}}[n] - 1\bigr\}.
    \]
    Thus $\stepout(T) = 0$ means the step response stays within~$[0,1]$
    everywhere on the grid.

  \item \textbf{Guaranteed violation lower bound} (a certificate implied by
    kernel negativity).  As established in \Cref{cor:violation}, for any
    mass-preserving convolution operator ($\sum_n h[n] = 1$),
    \[
      \mathrm{LB}(T) \;:=\; |\Sminus(h)|
    \]
    is a certified lower bound on worst-case violation: there exists an input
    $x \in \{0,1\}^N$ such that the output leaves~$[0,1]$ by at
    least~$\mathrm{LB}(T)$.
\end{enumerate}

We compute $(\stepout, \mathrm{LB})$ across the full cutoff sweep
$K = 1, 2, \dots, \mathrm{Nyquist}-1$ for three families: Fej\'er (admissible),
sharp cutoff, and signed control.  The result is striking: there is no reliable
relationship between~$\stepout$ and~$\mathrm{LB}$.  In particular, there are
cutoffs where~$\stepout$ is essentially machine zero
while~$\mathrm{LB}$ remains order-one.

\Cref{fig:obs-vs-guar} makes the certification gap visually explicit.  For the
admissible Fej\'er family, the step test and the certificate agree:
$\stepout = 0$ and $\mathrm{LB} = 0$ for all~$K$.  By contrast, both
non-admissible families exhibit broad regions of false reassurance, culminating
in an extreme point at the largest cutoff:

\begin{itemize}[leftmargin=2em]
  \item \textbf{Sharp cutoff at $K = \mathrm{Nyquist}-1$:}\;
    $\stepout \approx 4.44 \times 10^{-16}$ (numerically indistinguishable
    from~$0$), yet $\mathrm{LB} \approx 0.4995$.
  \item \textbf{Signed control at $K = \mathrm{Nyquist}-1$:}\;
    $\stepout \approx 4.44 \times 10^{-16}$, yet
    $\mathrm{LB} \approx 0.9990$.
\end{itemize}

In words: the step response can look perfect while a constructive $\{0,1\}$
witness is guaranteed to violate the admissibility bounds by~${\sim}0.5$--$1.0$.
The step test is therefore not a sound certification method for admissibility,
even for highly structured, shift-invariant spectral operators.

This cutoff sweep also clarifies that the phenomenon is not confined to a single
tuned parameter value.  The mismatch persists across wide ranges of~$K$: observed
overshoot can remain small (or even decrease) while the guaranteed violation
lower bound remains substantial.  The step response is therefore an unreliable
proxy for the operator's order-preserving behavior on the full $0$--$1$ cube.

\subsection{Mechanism: kernel nonnegativity and a step-test blind spot}
\label{sec:mechanism}

The certificate in \Cref{fig:hero} is intentionally paradoxical: the observed
step response can look essentially perfect while admissibility fails by a fixed,
constructive margin.  This is not a numerical artifact.  It is a structural
mismatch between what a step test probes and what admissibility requires.

\subsubsection{What a step test actually measures}

Let~$H$ be a translation-invariant discrete operator on~$\Z_N$ represented by
circular convolution with an impulse response (kernel)~$h$, i.e.,
\[
  (Hx)[n]
  \;=\;
  \sum_{m \in \Z_N} h[n-m]\,x[m].
\]
Assume the row-sum (mass preservation) condition $\sum_{m \in \Z_N} h[m] = 1$,
so that constant inputs are fixed: $H\one = \one$.  This condition is standard in
filtering and is essential for interpreting any ``step passes'' claim: if
constants are not preserved, a step test can fail for trivial reasons unrelated
to admissibility.

A step test typically evaluates~$Hx$ on a Heaviside-type input
$s \in \{0,1\}^N$ that is constant on each side of a single discontinuity.  For
definiteness, take the circular step
\[
  s[m] \;=\;
  \begin{cases}
    1, & m \in \{0,1,\dots,N/2-1\},\\
    0, & m \in \{N/2,\dots,N-1\}.
  \end{cases}
\]
Then the output at a point near the discontinuity is a partial sum of the kernel:
\[
  (Hs)[0] \;=\; \sum_{m=0}^{N/2-1} h[-m].
\]
Thus, the step test is not measuring whether~$h$ is nonnegative; it is measuring
whether certain contiguous cumulative sums of~$h$ stay within~$[0,1]$.  In other
words, a step test probes a specific averaging functional of the kernel, not the
full sign structure that governs admissibility on~$[0,1]^N$.

\subsubsection{What admissibility requires (and why negativity is fatal)}

Admissibility, as used here, is the requirement that~$H$ maps~$[0,1]^N$
to~$[0,1]^N$.  For convolution operators with $\sum h = 1$, this is equivalent
to kernel nonnegativity: $h[m] \ge 0$ for all~$m$.  The necessity direction is
elementary and is the key to the constructive witness in \Cref{fig:hero}:

\begin{itemize}[leftmargin=2em]
  \item If there exists an index~$m_0$ with $h[m_0] < 0$, consider an input
    $x \in \{0,1\}^N$ that places ones exactly on the negative coefficients of
    the relevant row (and zeros elsewhere).  Then $(Hx)[n]$ becomes a sum of
    negative terms and is strictly negative at that location.  This produces an
    explicit $[0,1]$-violating output.
\end{itemize}

Because this witness uses the support of the negative part of the kernel, it can
remain large even when the step response is nearly ideal.

\subsubsection{The blind spot: cancellation of high-frequency negative lobes
  under step averaging}

\Cref{fig:kernel-triptych} makes the mechanism visible in real space.  The
Fej\'er kernel is nonnegative everywhere (up to numerical noise at machine
precision), while the sharp cutoff and signed-control kernels exhibit oscillatory
sidelobes with substantial negative mass.  The insets in
\Cref{fig:kernel-triptych} emphasize a critical qualitative distinction:

\begin{itemize}[leftmargin=2em]
  \item \textbf{Fej\'er (admissible):} no negative lobes; the kernel is a
    genuine averaging mask.
  \item \textbf{Sharp cutoff (non-admissible):} alternating sidelobes with a
    negative minimum; the operator is not order-preserving.
  \item \textbf{Signed control (non-admissible):} deeper negative lobes; the
    violation margin is larger.
\end{itemize}

Why, then, can the step overshoot be essentially zero at the Nyquist-adjacent
cutoff while the operator is still non-admissible by a fixed amount?

The reason is that a step input aggregates kernel coefficients in a contiguous
block (a half-domain sum in the canonical step above).  For oscillatory kernels
whose negative coefficients occur in rapidly alternating patterns, contiguous
summation produces strong cancellation: negative lobes are ``paired'' with nearby
positive lobes and their contributions nearly cancel in the cumulative sum that
the step test measures.  This cancellation becomes more pronounced as the
oscillations become higher frequency, which is exactly what happens as the
spectral cutoff approaches Nyquist.

This creates a practical blind spot for step-based certification: the step input
can fail to excite (or can inadvertently average away) the negative part of the
kernel that drives inadmissibility on other $\{0,1\}$ inputs.

\Cref{fig:hero} isolates the resulting ``illusion point.''  Near
$K = \mathrm{Nyquist}-1$, the observed step overshoot is~$\approx 0$, suggesting
(incorrectly) that the operator behaves like a benign approximation to the
identity.  Yet the constructive witness (a $\{0,1\}$ input aligned with the
kernel's negative part) produces a guaranteed violation bounded away from zero.
The gap is not a calibration issue; it reflects the fact that step tests are
testing the wrong functional.

\subsubsection{Connection to classical Fourier summation: Dirichlet versus
  Fej\'er}

The triptych in \Cref{fig:kernel-triptych} is also consistent with a classical
harmonic-analysis distinction.  Sharp spectral truncation corresponds to
Dirichlet-type kernels, which are oscillatory and sign-indefinite; Fej\'er
summation corresponds to Ces\`aro averaging, whose kernel is nonnegative and
therefore defines an averaging operator.  From the admissibility perspective,
this classical positivity property is decisive: Fej\'er-type averaging is
compatible with $[0,1]$-preservation, while sharp truncation generically is not.

This connection matters for interpretation: the counterexample is not a contrived
corner case.  It reflects a structural incompatibility between sign-indefinite
kernels and order-preserving behavior, and it explains why ``good-looking'' step
responses can coexist with rigorous admissibility failure.

\subsection{Certified violations across the cutoff sweep}
\label{sec:certified-sweep}

The witness construction in \Cref{sec:characterization} turns admissibility into
a computable certificate: for any constant-preserving convolution operator~$T_h$
(i.e., $\sum_m h[m] = 1$), a single scalar
\[
  \Sminus(h)
  \;:=\;
  \sum_{m:\,h[m]<0} h[m]
\]
quantifies a guaranteed worst-case failure whenever $\Sminus(h) < 0$.
Specifically, define the binary witness signals
\[
  \xneg[m] := \mathbf{1}\{h[-m]<0\},
  \qquad
  \xpos[m] := \mathbf{1}\{h[-m]>0\}.
\]
Then at the corresponding output index,
\[
  (T_h \xneg)[0] = \Sminus(h) < 0,
  \qquad
  (T_h \xpos)[0] = 1 - \Sminus(h) = 1 + |\Sminus(h)| > 1.
\]
Thus, for any mass-preserving operator,
\[
  \Sminus(h) < 0
  \quad\Longrightarrow\quad
  \exists\,x \in \{0,1\}^N:\;
  \min(T_h x) \le \Sminus(h)
  \;\;\text{and}\;\;
  \max(T_h x) \ge 1 + |\Sminus(h)|.
\]
This yields a certified lower bound on violation magnitude:
\[
  \worstlb(h) \;:=\; -\Sminus(h) = |\Sminus(h)|,
\]
which is independent of any particular ``test signal'' and depends only on the
kernel.

We now contrast this certificate with step testing across a dense sweep of
spectral cutoffs.  For each cutoff $K \in \{1,\dots,K_{\mathrm{Nyq}}\}$, we
compute:

\begin{itemize}[leftmargin=2em]
  \item Negative mass~$\Sminus(h_K)$ and its certified bound
    $\worstlb(h_K) = |\Sminus(h_K)|$.
  \item Observed step violation~$\stepout(K)$, defined as the maximum amount by
    which the step response leaves~$[0,1]$:
    \[
      \stepout(K)
      \;:=\;
      \max\Bigl\{0,\;\;
        -\min_n y_{\mathrm{step}}[n],\;\;
        \max_n y_{\mathrm{step}}[n] - 1\Bigr\},
      \qquad
      y_{\mathrm{step}} := T_{h_K} x_{\mathrm{step}}.
    \]
\end{itemize}

The results, summarized in \Cref{fig:obs-vs-guar}, cleanly separate the
admissible and non-admissible families:

\begin{itemize}[leftmargin=2em]
  \item \textbf{Fej\'er averaging} has $h_K[m] \ge 0$ for all~$m$ and all~$K$,
    hence $\Sminus(h_K) = 0$, $\worstlb(h_K) = 0$, and $\stepout(K) = 0$
    throughout the sweep.  This is the expected behavior of a bona fide averaging
    operator.
  \item \textbf{Sharp spectral truncation} exhibits kernel negativity for every
    $K < K_{\mathrm{Nyq}}$, so $\Sminus(h_K) < 0$ and $\worstlb(h_K) > 0$ for
    every such cutoff.  Importantly, $\stepout(K)$ is not monotone
    in~$\worstlb(h_K)$: near the Nyquist limit the step test can report
    essentially no overshoot even while the guaranteed worst-case violation
    remains substantial.
  \item \textbf{Signed control} behaves similarly but with larger negative mass;
    the certified bound~$\worstlb(h_K)$ is correspondingly larger, even when the
    step response appears benign.
\end{itemize}

A particularly instructive comparison is between a mid-band cutoff $K = 511$ and
the near-Nyquist cutoff $K = 1023$ (with $N = 2048$, so
$K_{\mathrm{Nyq}} = 1023$).  At $K = 511$, both sharp and signed truncations
already show visible step overshoot, and the certificates indicate much larger
possible failures:
\[
  \stepout(511) \approx 6.8 \times 10^{-2}\;\text{(sharp)},
  \qquad
  \worstlb(h_{511}) \approx 1.233\;\text{(sharp)},
\]
\[
  \stepout(511) \approx 1.37 \times 10^{-1}\;\text{(signed)},
  \qquad
  \worstlb(h_{511}) \approx 2.467\;\text{(signed)}.
\]
At $K = 1023$, however, the step test becomes maximally misleading:
$\stepout(1023)$ drops to floating-point noise, while the certified failures
persist:
\[
  \stepout(1023) \approx 4.4 \times 10^{-16}
  \quad\text{but}\quad
  \worstlb(h_{1023}) \approx 0.4995\;\text{(sharp)},
  \quad
  0.9990\;\text{(signed)}.
\]
In other words, even when the step response appears perfectly bounded, sharp
truncation still admits a binary input that forces an output below~$-0.4995$ and
another that forces an output above~$1.4995$; the signed control admits failures
below~$-0.9990$ and above~$1.9990$.  These are not numerical artifacts: they are
algebraic consequences of negative coefficients under the mass constraint
$\sum_m h[m] = 1$.

\subsection{Why step tests can pass while admissibility fails: a near-Nyquist
  blind spot}
\label{sec:why-step-fails}

The preceding section shows a numerical fact that can be surprising on first
contact: for near-Nyquist cutoffs, the step response of sharp (and signed)
spectral truncation can be essentially perfect---$\stepout(K) \approx 0$---even
though the operator is provably non-admissible and admits large certified
violations (\Cref{fig:hero,fig:obs-vs-guar}).  This is not a contradiction.  It
is a limitation of what the step test actually probes.

To make the distinction explicit, recall that for any convolution kernel~$h$, the
output at a fixed index is a dot product with the input:
\[
  (T_h x)[0]
  \;=\;
  \sum_{m=0}^{N-1} h[m]\; x[-m].
\]
A step test evaluates this dot product for one particular structured
input~$x_{\mathrm{step}}$, typically an indicator of a contiguous half-interval
(up to periodic convention).  Consequently, for each output index, the step
response aggregates the kernel coefficients through contiguous block sums.  For
example, if $x_{\mathrm{step}}$ equals~$1$ on a contiguous set
$I \subset \{0,\dots,N-1\}$ and~$0$ elsewhere, then
\[
  (T_h x_{\mathrm{step}})[0]
  \;=\;
  \sum_{m:\,-m \in I} h[m],
\]
which is the sum of coefficients over a contiguous block of indices determined by
the step location.  In short:

\begin{itemize}[leftmargin=2em]
  \item The step test probes only interval (block) sums of the kernel.
  \item Admissibility requires all coefficients to be nonnegative, equivalently
    that every subset-sum induced by $\{0,1\}$-valued inputs lies in~$[0,1]$.
\end{itemize}

The witness construction in \Cref{sec:characterization} exploits exactly this
gap.  If any coefficient is negative, one can choose a binary input~$\xneg$ that
places~$1$ precisely on the negative support of the relevant row, forcing the
output below~$0$.  A contiguous block indicator (a step) generally cannot realize
that sign-aligned selection pattern, especially when the negative coefficients
oscillate rapidly.

This mismatch becomes most pronounced near Nyquist for sharp truncation.  As
shown in the kernel triptych (\Cref{fig:kernel-triptych}), the sharp-truncation
kernel has alternating-sign sidelobes that become increasingly high-frequency as
the cutoff approaches Nyquist.  When such an oscillatory kernel is summed over
long contiguous blocks---as a step test effectively does---positive and negative
contributions can cancel to produce a monotone-looking transition with little or
no overshoot.  Yet the coefficients remain sign-indefinite, and the operator
remains non-admissible by \Cref{thm:main}.

We refer to this as a \emph{near-Nyquist blind spot} of step testing: a
structured input whose block-sum averaging can ``mask'' oscillatory negativity in
the impulse response, even when that negativity is large enough to yield
substantial worst-case violations on other binary inputs.  In \Cref{fig:hero},
this blind spot appears as a visually compelling ``all clear'' step response at
$K = 1023$ despite the certified outputs $y_{\mathrm{neg}}(0) < 0$ and
$y_{\mathrm{pos}}(0) > 1$.

This mechanism clarifies the logical status of step testing in the present
setting.  Because admissibility requires boundedness for all $x \in [0,1]^N$, a
step test is a necessary sanity check: every admissible operator must pass it.
But it is not sufficient.  The correct certification criterion is the one given
by \Cref{thm:main}---nonnegativity of the kernel coefficients (equivalently,
nonnegativity of every row of the operator matrix)---and the witness-based
quantity~$\Sminus(h)$ provides an immediate quantitative certificate whenever the
criterion is violated.

\section{Certified Sweep over Spectral Cutoffs and a Nyquist-Blind
  Counterexample}
\label{sec:sweep}

This section instantiates the general criterion above for three widely used
periodic spectral constructions and quantifies---over all cutoffs---the gap
between an observed step response and a certified worst-case violation.  The main
message is that the ``step overshoot'' diagnostic can exhibit a blind spot: there
are cutoffs at which the canonical step response is essentially exact, even
though the operator is provably non-admissible by a fixed margin on a simple
$\{0,1\}$-valued witness input.

\subsection{Operators and diagnostics}
\label{sec:ops-diag}

We work on a periodic grid of length~$N$ (even), with indices
$n \in \{0,\dots,N-1\}$, and use the unitary discrete Fourier transform.  For
each cutoff $K \in \{1,\dots,N/2-1\}$ we define a Fourier multiplier~$H_K(k)$
on integer frequencies~$k$ and the associated linear operator
\[
  T_K x
  \;=\;
  \mathcal{F}^{-1}\!\bigl(H_K \cdot \widehat{x}\bigr),
\]
equivalently a circular convolution $T_K x = h_K * x$ with kernel
$h_K = \mathcal{F}^{-1}(H_K)$.

We evaluate three multipliers:
\begin{enumerate}[label=\arabic*.,leftmargin=2em]
  \item \textbf{Fej\'er (Ces\`aro) low-pass.}
    \[
      H^{\mathrm{F}}_K(k) \;=\;
      \begin{cases}
        1 - \frac{|k|}{K+1}, & |k| \le K,\\
        0, & |k| > K.
      \end{cases}
    \]
  \item \textbf{Sharp low-pass (Dirichlet truncation).}
    \[
      H^{\mathrm{Sh}}_K(k) \;=\;
      \begin{cases}
        1, & |k| \le K,\\
        0, & |k| > K.
      \end{cases}
    \]
  \item \textbf{Signed control (low-pass preserved, complement flipped).}
    \[
      H^{\pm}_K(k) \;=\;
      \begin{cases}
        +1, & |k| \le K,\\
        -1, & |k| > K.
      \end{cases}
    \]
\end{enumerate}

All three are normalized at DC: $H(0) = 1$.  In our implementation we
additionally verify the row-sum (mass preservation) condition
$\sum_{n=0}^{N-1} h_K[n] = 1$ to machine precision for every~$K$.  This
condition is important: once $\sum h_K = 1$, any negative mass must be
compensated by positive mass, which forces a corresponding upper-violation
witness (\Cref{sec:characterization}).

We track three diagnostics:

\begin{itemize}[leftmargin=2em]
  \item \textbf{Observed step overshoot.}  Let $s \in [0,1]^N$ be the canonical
    midpoint step,
    \[
      s[n] \;=\;
      \begin{cases}
        1, & n < N/2,\\
        0, & n \ge N/2,
      \end{cases}
    \]
    and let $y = T_K s$.  We define
    \[
      \stepout(K)
      \;=\;
      \max\Bigl\{\max_n (y[n]-1)_+,\;\;\max_n (-y[n])_+\Bigr\},
    \]
    i.e., the maximum excursion of the step response outside~$[0,1]$.

  \item \textbf{Kernel negativity and certified worst-case violation.}  Define
    the negative and positive sums
    \[
      \Sminus(h_K) = \sum_{n:\,h_K[n]<0} h_K[n],
      \qquad
      \Splus(h_K) = \sum_{n:\,h_K[n]>0} h_K[n].
    \]
    Under $\sum h_K = 1$, we have $\Splus(h_K) = 1 - \Sminus(h_K)$.
    \Cref{thm:main} implies the certified lower bound
    \[
      \mathrm{LB}(K)
      \;=\;
      \max\{-\Sminus(h_K),\;\Splus(h_K) - 1\}
      \;=\;
      -\Sminus(h_K),
    \]
    and provides a constructive $\{0,1\}$-valued witness achieving this
    violation at a specific coordinate.

  \item \textbf{Minimum kernel coefficient.}  We record
    $k_{\min}(K) = \min_n h_K[n]$ as a compact ``real-space mechanism''
    indicator.
\end{itemize}

\subsection{Full $K$-sweep: what the step test sees vs.\ what it cannot certify}
\label{sec:full-sweep}

We fix $N = 2048$ and sweep all cutoffs $K = 1, 2, \dots, N/2-1 = 1023$.  For
each~$K$ and each operator family we compute $\stepout(K)$, $\Sminus(h_K)$,
and~$\mathrm{LB}(K)$.

The outcomes align with the structural theorem and make the ``false reassurance''
concrete:

\begin{itemize}[leftmargin=2em]
  \item \textbf{Fej\'er is admissible across the sweep.}  We observe
    $k_{\min}(K) = 0$ and $\Sminus(h_K) = 0$ (hence $\mathrm{LB}(K) = 0$)
    for all~$K$ in the sweep, consistent with nonnegativity of the Fej\'er
    kernel.  The step response stays within~$[0,1]$, and $\stepout(K) = 0$.

  \item \textbf{Sharp and signed are non-admissible across the sweep.}  For
    every cutoff tested, both the sharp and signed kernels exhibit negative
    coefficients ($k_{\min}(K) < 0$), hence $\Sminus(h_K) < 0$ and
    $\mathrm{LB}(K) > 0$.  This yields a certified, input-independent statement:
    there exists a $\{0,1\}$-valued input for which the output violates
    admissibility by at least~$\mathrm{LB}(K)$ in magnitude at a specific
    coordinate.

  \item \textbf{Yet the observed step overshoot can be small---and at one
    cutoff, essentially zero.}  As~$K$ increases, the observed step
    overshoot~$\stepout(K)$ decreases, and at $K = N/2-1$ it drops to
    machine-level ($\approx 4.4 \times 10^{-16}$) for both sharp and signed
    operators.  At exactly the same cutoff, the certified worst-case violation
    lower bound remains macroscopically positive:
    $\mathrm{LB}(1023) \approx 0.5$ (sharp) and
    $\mathrm{LB}(1023) \approx 1$ (signed).
\end{itemize}

\Cref{fig:obs-vs-guar} summarizes the phenomenon in a single view: for a wide
range of cutoffs there is no monotone relationship between the observed step
overshoot (horizontal axis) and the certified worst-case violation lower bound
(vertical axis).  In particular, the circled ``Nyquist$-$1'' point lies near the
horizontal axis (step looks perfect) while still high on the vertical axis
(operator is provably non-admissible).

\subsection{The Nyquist blind spot: an explicit, closed-form explanation
  at $K = N/2-1$}
\label{sec:nyquist-closed}

The special cutoff $K = N/2-1$ admits a simple analytic description that explains
why the step test becomes uninformative.

Let $k_{\mathrm{Nyq}} = N/2$ denote the Nyquist frequency on an even grid,
whose real-valued Fourier mode is~$(-1)^n$.  Consider the sharp operator at
$K = N/2-1$.  Its multiplier is ``all-pass except Nyquist'':
\[
  H^{\mathrm{Sh}}_{N/2-1}(k) \;=\;
  \begin{cases}
    1, & k \neq k_{\mathrm{Nyq}},\\
    0, & k = k_{\mathrm{Nyq}}.
  \end{cases}
\]
Therefore,
\[
  T^{\mathrm{Sh}}_{N/2-1} \;=\; I \;-\; P_{\mathrm{Nyq}},
\]
where~$P_{\mathrm{Nyq}}$ is the orthogonal projector onto the Nyquist mode.
The corresponding convolution kernel is explicit:
\[
  h^{\mathrm{Sh}}_{N/2-1}[n]
  \;=\;
  \delta[n] \;-\; \frac{1}{N}(-1)^n.
\]
From this expression we can compute the negativity certificate exactly.  Among
the~$N$ indices, exactly $N/2-1$ nonzero even lags have coefficient~$-1/N$,
hence
\[
  \Sminus\!\bigl(h^{\mathrm{Sh}}_{N/2-1}\bigr)
  \;=\;
  -\frac{N/2-1}{N}
  \;=\;
  -\Bigl(\frac{1}{2} - \frac{1}{N}\Bigr),
  \qquad
  \mathrm{LB}(N/2-1) = \frac{1}{2} - \frac{1}{N}.
\]
For $N = 2048$, this yields $\mathrm{LB}(1023) = 0.49951171875$, matching the
certified value reported in \Cref{fig:hero}.

Now examine the step test.  The midpoint step $s[n] = \mathbf{1}_{n<N/2}$ has
zero Nyquist coefficient when~$N/2$ is even (as in our case $N = 2048$):
\[
  \langle s, (-1)^n \rangle
  \;=\;
  \sum_{n=0}^{N/2-1} (-1)^n
  \;=\; 0.
\]
Consequently $P_{\mathrm{Nyq}} s = 0$ and
\[
  T^{\mathrm{Sh}}_{N/2-1}\, s
  \;=\;
  \bigl(I - P_{\mathrm{Nyq}}\bigr) s
  \;=\; s.
\]
That is, at $K = N/2-1$ the sharp operator reproduces the canonical step
\emph{exactly}, so $\stepout(N/2-1) = 0$, even though
$\mathrm{LB}(N/2-1) \approx 1/2$ certifies a substantial admissibility failure
on other inputs.  This is the step test's blind spot in its simplest form: the
tested signal happens to be orthogonal to the single mode on which the operator
differs from the identity.

The signed operator at the same cutoff is equally transparent.  At $K = N/2-1$,
\[
  H^{\pm}_{N/2-1}(k) \;=\;
  \begin{cases}
    1, & k \neq k_{\mathrm{Nyq}},\\
    -1, & k = k_{\mathrm{Nyq}},
  \end{cases}
  \qquad\Longrightarrow\qquad
  T^{\pm}_{N/2-1} = I - 2P_{\mathrm{Nyq}}.
\]
Hence its kernel is
$h^{\pm}_{N/2-1}[n] = \delta[n] - \frac{2}{N}(-1)^n,$
and the negativity sum is
\[
  \Sminus\!\bigl(h^{\pm}_{N/2-1}\bigr)
  \;=\;
  -\frac{2(N/2-1)}{N}
  \;=\;
  -(1 - \tfrac{2}{N}),
  \qquad
  \mathrm{LB}(N/2-1) = 1 - \frac{2}{N}.
\]
For $N = 2048$, $\mathrm{LB}(1023) = 0.9990234375$.  Yet the midpoint step
again satisfies $P_{\mathrm{Nyq}} s = 0$, so $T^{\pm}_{N/2-1} s = s$ and the
step test again reports essentially perfect behavior.

This closed-form example explains the geometry of
Figures~\ref{fig:hero}~and~\ref{fig:obs-vs-guar}.  At $K = N/2-1$, the step test
probes a one-dimensional subspace that is invariant under the operator, while
admissibility is a uniform, coordinatewise property over the full
hypercube~$[0,1]^N$.  The constructive witness guaranteed by \Cref{thm:main}
makes the separation explicit: it aligns the input with the sign pattern
of~$h_K$ to prevent cancellations that occur in the step response.

\subsection{Mechanism in real space: kernel nonnegativity as the visible
  boundary}
\label{sec:real-space-mechanism}

\Cref{fig:kernel-triptych} provides a compact mechanism-level view.  At a
representative ``primary'' cutoff (we use $K = 511$ in the released figures), the
Fej\'er kernel is visibly nonnegative in real space, while the sharp and signed
kernels exhibit oscillatory sidelobes with negative troughs.  The insets quantify
these minima; together with the verified row-sum condition $\sum h_K = 1$, any
negative trough forces a corresponding positive mass elsewhere, which is
precisely what enables the $\{0,1\}$-valued witnesses achieving $y(0) < 0$
and~$y(0) > 1$ in \Cref{fig:hero}.

In other words, for this class of translation-invariant operators the
admissibility boundary is not a subtle functional-analytic phenomenon: it is
literally the sign of the impulse response.  The ``step looks fine'' regime is a
property of one particular probe signal, not a certificate of admissibility.

\section{Practical Certification: What to Check Instead of Step Tests}
\label{sec:certification}

The results above separate two notions that are often conflated in practice:
\begin{enumerate}[label=\arabic*.,leftmargin=2em]
  \item \emph{Signal-specific behavior} (e.g., whether a particular step
    response overshoots), and
  \item \emph{Operator-level admissibility} (uniform preservation of the unit
    interval for all inputs in~$[0,1]^N$).
\end{enumerate}

Step responses can be useful exploratory diagnostics, but they do not provide an
admissibility certificate.  In contrast, for the operator classes considered
here, admissibility admits a simple, complete check.

\subsection{Why a step test cannot certify admissibility}
\label{sec:why-not}

A step test probes a single input (or a small family of inputs).  Admissibility
is a uniform, worst-case property over a high-dimensional set.  The gap between
these is not merely quantitative; it is structural.

The Nyquist$-$1 example in \Cref{sec:nyquist-closed} exhibits the simplest
possible failure mode.  At $K = N/2-1$, the sharp and signed operators differ
from the identity only on the one-dimensional Nyquist subspace.  The canonical
midpoint step happens to be orthogonal to that subspace on even half-grids, hence
the step response is reproduced exactly.  This is not ``good behavior'' in an
operator-theoretic sense; it is an artifact of a particular probe signal missing
a particular oscillatory component.

More generally, any test that evaluates an operator on a finite collection of
inputs can be blind to directions that are not excited by those inputs.  In the
present setting, the mechanism is especially transparent: sharp truncation
produces an impulse response with alternating-sign tails (Dirichlet-type
ringing), and admissibility fails because those negative coefficients can be
isolated by a $\{0,1\}$-valued witness constructed from the kernel's sign pattern
(\Cref{thm:main}).  A step input, by contrast, mixes positive and negative lobes
and can exhibit substantial cancellation.  When cancellation aligns with a
missing Fourier mode (as at Nyquist$-$1), the step test becomes maximally
uninformative.

\subsection{A complete and constructive admissibility certificate}
\label{sec:complete-cert}

For any constant-preserving periodic convolution operator~$T_h$ (equivalently,
any circular filter with $\sum_n h[n] = 1$), admissibility is equivalent to the
pointwise nonnegativity of the impulse response~$h$.  This yields a direct
certificate:

\begin{itemize}[leftmargin=2em]
  \item Compute the impulse response~$h$ (e.g., by inverse FFT of the
    multiplier~$H$).
  \item Verify the mass condition $\sum_n h[n] = 1$ (or enforce it by
    construction).
  \item Check whether $\min_n h[n] \ge 0$.
\end{itemize}

If $\min_n h[n] \ge 0$, then~$T_h$ is admissible.  If $\min_n h[n] < 0$,
then~$T_h$ is not admissible, and moreover there is a certified worst-case
violation with an explicit witness input in~$\{0,1\}^N$.

A numerically stable way to report the failure magnitude is via the negative mass
$\Sminus(h) = \sum_{n:\,h[n]<0} h[n]$.  Under $\sum h = 1$, \Cref{thm:main}
implies a guaranteed bound
\[
  \exists\,x \in \{0,1\}^N:\quad
  \min(T_h x) \le \Sminus(h)
  \quad\text{and}\quad
  \max(T_h x) \ge 1 - \Sminus(h),
\]
so that the certified violation is at least~$-\Sminus(h)$ below~$0$ and at
least~$-\Sminus(h)$ above~$1$.  This bound is not heuristic: it is attained at a
specific output coordinate by the witness
$\xneg = \mathbf{1}_{\{h<0\}}$ (and similarly
$\xpos = \mathbf{1}_{\{h>0\}}$), applied to the corresponding row/shift of the
convolution.

Operationally, this means that an admissibility assessment can be reduced to a
single inverse FFT and a scan for negative entries---an $O(N\log N)$ procedure
that is both stronger and simpler than running suites of signal-specific tests.

\subsection{Design implication: ``spectral cutoff'' does not imply ``averaging
  operator''}
\label{sec:design}

In applications, sharp spectral truncation is often described informally as
``keeping low frequencies'' and therefore as a form of smoothing.
\Cref{sec:sweep} shows why this intuition is incomplete when the goal is
boundedness preservation: a low-pass multiplier can still correspond to an
oscillatory impulse response with negative lobes, and those negative lobes are
exactly what break admissibility.

The Fej\'er family illustrates the complementary point.  Although it is also a
low-pass construction, it corresponds to a nonnegative kernel and therefore to a
genuine averaging operator on~$[0,1]^N$.  In this sense, admissibility is not
primarily a statement about which frequencies are kept; it is a statement about
whether the real-space operator is a convex averaging map.

A practical corollary is that ``small observed overshoot on a step response''
should be treated as a quality metric for that probe signal, not as a safety
guarantee.  If admissibility matters (e.g., preserving physical bounds,
probabilities, or normalized intensities), then kernel nonnegativity is the
appropriate criterion, and the negative-mass witness provides a certified
fallback whenever the criterion fails.

\subsection{Recommended reporting standard}
\label{sec:reporting}

To make admissibility claims falsifiable and comparable across implementations,
we recommend that any report asserting ``boundedness preservation'' for a
translation-invariant filter include:

\begin{enumerate}[label=\arabic*.,leftmargin=2em]
  \item The grid size~$N$ and the multiplier definition~$H(k)$ (or
    kernel~$h[n]$).
  \item A mass-preservation check $\sum_n h[n] = 1$ to stated tolerance.
  \item The minimum coefficient $\min_n h[n]$.
  \item The negative mass~$\Sminus(h)$ (or equivalently the certified
    violation~$-\Sminus(h)$).
  \item If non-admissible, the explicit $\{0,1\}$-valued witness input that
    attains the bound at a specified index.
\end{enumerate}

These quantities are sufficient to reproduce the admissibility verdict and its
worst-case magnitude without reliance on probe-dependent behavior such as step
responses.

\section{Discussion}
\label{sec:discussion}

The central message of this work is modest in mathematical difficulty but
consequential in practice: signal-based tests are not certificates of
operator-level boundedness preservation.  In the periodic convolution setting,
admissibility is governed by a complete and checkable criterion---kernel
nonnegativity under unit mass---while step responses probe only a narrow slice of
the input space and can be systematically misleading.

\subsection{Why the distinction matters}

Boundedness preservation is not an aesthetic preference; it is often a modeling
requirement.  Examples include intensity, probability, or occupancy fields that
must remain in~$[0,1]$; concentrations or normalized densities that must remain
nonnegative and bounded; and monotone level-set or mask evolutions where values
outside~$[0,1]$ are physically meaningless or trigger downstream clipping.

In these settings, ``overshoot is small on a representative step'' is not a
safety statement.  It is compatible with an operator that admits inputs producing
large negative values and values well above one (\Cref{cor:violation}).  A
practitioner who relies on step plots as a proxy for boundedness risks building a
pipeline that behaves plausibly on common probes while failing sharply on
admissible but untested configurations.

\subsection{What is genuinely new here}

Several ingredients used in the proofs are classical (e.g., ``positive
coefficients yield convex combinations'').  The contribution is the operational
closure of the story in a setting where informal heuristics are common:

\begin{enumerate}[label=\arabic*.,leftmargin=2em]
  \item A short, explicit equivalence theorem (\Cref{thm:main}) tailored to
    admissibility on~$[0,1]^N$ for periodic convolution operators under a mass
    constraint.
  \item A constructive falsification mechanism that converts a single negative
    coefficient (or negative mass) into an explicit $\{0,1\}$-valued witness and
    a certified lower bound on the magnitude of failure.
  \item A demonstration of a practically important failure mode: near the
    Nyquist cutoff, the step response can be essentially exact while the
    guaranteed worst-case violation remains order one.
\end{enumerate}

Taken together, these points replace a folklore-style narrative (``low-pass
filters behave like averaging'') with a crisp boundary: admissibility is about
real-space positivity, not about discarding high frequencies.

\subsection{The ``blind spot'' phenomenon and why it is generic}

The near-Nyquist example is an extreme instance of a broader phenomenon: a probe
family can fail to excite the directions in which an operator is most
non-admissible.  In the present case, the mechanism is visible at the level of
kernels: sharp truncation produces oscillatory impulse responses with
alternating-sign sidelobes; a step input aggregates those sidelobes in a
structured way, and cancellations can be pronounced; at Nyquist$-$1 on even
grids, the canonical midpoint step happens to be orthogonal to the
one-dimensional Nyquist subspace where sharp truncation differs from the
identity, yielding an exact step output.

Nothing in this mechanism depends on fragile tuning.  It is a consequence of
using a small set of structured probes to test a uniform property over a large
domain.  The witness construction makes this mismatch concrete: it produces a
$\{0,1\}$-valued input aligned with the sign pattern of the negative kernel
lobes, eliminating the cancellations that steps can inadvertently exploit.

\subsection{Implications for filter design and for reporting practice}

If boundedness preservation is a requirement, then admissibility should be
treated as a design constraint rather than as a performance metric.  In practice
this leads to two recommendations.

First, when constructing spectral filters intended to act as averaging operators,
prefer constructions that are provably nonnegative in real space (e.g.,
Fej\'er-type kernels in one dimension, and their multidimensional analogues),
rather than sharp or signed truncations whose kernels necessarily oscillate.

Second, when reporting claims about boundedness preservation, provide an
operator-level certificate.  For periodic convolution, the minimal certificate
consists of the minimum kernel entry $\min_n h[n]$, the negative
mass~$\Sminus(h)$ (or equivalently a guaranteed violation magnitude), and, when
$\min h < 0$, the explicit witness input that attains the bound at a specified
coordinate.  These quantities are easy to compute and allow others to verify the
admissibility verdict without reproducing a particular set of probe plots.

\section{Related Work and Context}
\label{sec:related}

The tension between spectral truncation and real-space positivity is classical.
Sharp Fourier cutoffs are associated with Dirichlet kernels and with Gibbs-type
oscillations; by contrast, Ces\`aro/Fej\'er summation produces positive kernels
and improves convergence properties through averaging.  The present work draws on
this classical distinction but frames it in an operator-theoretic language
tailored to admissibility on the unit cube and to practical certification.

Positivity-preserving linear maps are fundamental across analysis and numerical
methods.  In finite dimensions, row-stochastic matrices with nonnegative entries
define Markov operators, preserving the simplex and the unit interval under
appropriate constraints.  \Cref{thm:main} can be read as a specialization of
this general viewpoint to circulant operators (periodic convolution), together
with a constructive witness that quantifies failure when negativity is present.

In numerical PDEs and signal processing, related ideas appear under maximum
principles, monotonicity constraints, and positivity-preserving discretizations.
The message here is aligned with that tradition: if one wants guaranteed bounds,
the correct object to certify is the operator, not a single probe trajectory.
The contribution is to make the certificate and the failure mechanism explicit for
commonly used spectral truncations, including a sharp ``false negative'' regime
where step-response testing is maximally misleading.

\section{Reproducibility, Artifacts, and Availability}
\label{sec:reproducibility}

\subsection{Overview}

All figures and numerical tables in this manuscript are generated by
deterministic scripts that emit a \emph{specification hash} (a SHA-256 digest of
the experiment specification) and a \emph{determinism hash} (a SHA-256 digest of
the resolved parameters, including the platform-independent configuration used to
produce the outputs).  These hashes are printed in the run logs and stored in
machine-readable JSON to support independent verification and to prevent ``silent
drift'' between revisions.

\subsection{Data and code availability}

The Zenodo record
(\href{https://doi.org/10.5281/zenodo.18689854}{doi:10.5281/zenodo.18689854})
archives the versioned software release, while the paper's figure-generation demo
and pre-generated figure artifacts are provided in the public repository at a
commit-pinned permalink (see the ``Data and Code Availability'' section preceding
the bibliography for exact paths and certified run identifiers).

The demonstration produces a self-contained JSON artifact containing the
numerical values underlying the figures (including the admissibility witnesses and
the certified violation bounds), along with the specification and determinism
hashes reported in the text.

\section{Conclusion}
\label{sec:conclusion}

This paper establishes a simple, complete, and practically useful
characterization of boundedness-preserving periodic convolution operators
on~$[0,1]^N$: under the standard mass constraint $\sum_n h[n] = 1$,
admissibility is equivalent to kernel nonnegativity.  The proof is short, but the
operational implications are substantial.

Two consequences are particularly relevant for common spectral filtering
workflows:

\begin{enumerate}[label=\arabic*.,leftmargin=2em]
  \item \textbf{Step responses are not certificates.}  Even when a step test
    shows no visible overshoot---indeed, even when the step is reproduced to
    machine precision---an operator may still be non-admissible.  The failure can
    be certified and large, as measured by the negative mass~$\Sminus(h)$ and
    realized by an explicit $\{0,1\}$-valued witness input.

  \item \textbf{Real-space positivity, not frequency cutoff, governs
    admissibility.}  Fej\'er-type averaging kernels remain admissible across
    cutoffs because they are nonnegative and unit-mass; sharp and signed
    truncations generally fail admissibility because their impulse responses
    necessarily oscillate and acquire negative lobes.
\end{enumerate}

In settings where values represent probabilities, intensities, normalized
concentrations, or any quantity requiring invariant bounds, admissibility should
be treated as a first-class constraint.  For periodic convolution, the
recommended practice is straightforward: compute $\min h$ and~$\Sminus(h)$.  If
either indicates negativity, admissibility fails and a concrete violating input
can be produced immediately.

\appendix

\section{Proofs and Quantitative Witnesses}
\label{app:proofs}

\subsection{Proof of Theorem~\ref{thm:main} (boundedness preservation
  $\Leftrightarrow$ nonnegative kernel)}
\label{app:proof-main}

We restate the theorem for convenience.

\begin{theoremrestate}[Discrete boundedness $\Leftrightarrow$ nonnegative coefficients;
  periodic convolution]
Let $h \in \R^N$ and define the periodic convolution operator
$T_h : \R^N \to \R^N$ by
\[
  (T_h x)[n]
  \;=\;
  \sum_{m=0}^{N-1} h[m]\; x[n-m]
  \qquad (n \;\mathrm{mod}\; N).
\]
Assume the mass constraint $\sum_{m=0}^{N-1} h[m] = 1$ (equivalently
$T_h \one = \one$).  Then the following are equivalent:
\begin{enumerate}[label=(\roman*)]
  \item $T_h$ maps~$[0,1]^N$ into~$[0,1]^N$.
  \item $h[m] \ge 0$ for all~$m$.
\end{enumerate}
\end{theoremrestate}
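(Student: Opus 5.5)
The plan is to prove the two implications separately, working throughout with a single output coordinate. Since $T_h$ is circulant, the functional $x \mapsto (T_h x)[n]$ has the same coefficient multiset $\{h[m]\}$ for every $n$, only permuted. So nonnegativity, and any extremal sum over coefficients, can be read off one row. I will write $(T_h x)[0] = \sum_{j} a[j]\,x[j]$ with $a[j] = h[-j \bmod N]$. The mass constraint then reads $\sum_j a[j] = 1$.

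For (ii)$\Rightarrow$(i), I fix $n$ and note that $(T_h x)[n] = \sum_m h[m]\,x[n-m]$ has weights $h[m]\ge 0$ summing to $1$. It is therefore a convex combination of the entries $x[n-m]\in[0,1]$, and so it lies in $[0,1]$ because the interval is convex. This holds for each $n$, which gives $T_h x \in [0,1]^N$.

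For (i)$\Rightarrow$(ii), I will argue by contraposition, reusing the witness construction already given in the proof of \Cref{thm:main}. Suppose some $h[m_0]<0$. Then $\Sminus := \sum_{j:\,a[j]<0} a[j] < 0$. I take the input $\xneg = \mathbf{1}\{a<0\} \in \{0,1\}^N \subset [0,1]^N$. The step to check is that $(T_h \xneg)[0] = \sum_{j:\,a[j]<0} a[j] = \Sminus < 0$, which contradicts (i). For completeness, and to match \Cref{cor:violation}, I also record the upper witness $\xpos = \mathbf{1}\{a>0\}$. The mass constraint gives $\sum_{a[j]>0} a[j] = 1 - \Sminus > 1$. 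Either witness alone suffices for the contradiction.

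The obstacle I expect is bookkeeping rather than analysis. I must keep the index reversal $a[j] = h[-j]$ consistent, so that the witness really picks out the negative coefficients of the row being evaluated. I must also confirm that the mass constraint is not needed for necessity via $\xneg$, since it enters only in the $\xpos$ bound and in the sufficiency direction. Finally, I will note that the argument never uses circulant structure beyond the reduction to one row. It therefore extends verbatim to any row-stochastic matrix, applied row by row.
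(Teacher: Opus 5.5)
Your proposal is correct and follows the paper's proof essentially step for step. Sufficiency is the same convex-combination argument, and necessity uses the same binary witness $\xneg$ placed on the negative coefficients of the row at output index $0$, with the same index reversal $\xneg[-m]=\mathbf{1}\{h[m]<0\}$; the upper witness $\xpos$ is likewise an optional extra. Your side remarks are also accurate: the mass constraint is needed only for sufficiency and for the $\xpos$ bound, and $\sum_{a[j]>0}a[j]=1-\Sminus$ holds with equality, whereas the paper states only $\ge$.
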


\begin{proof}
\textbf{(ii)~$\Rightarrow$~(i).}\;
Fix~$n$.  If $h[m] \ge 0$ for all~$m$ and $\sum_m h[m] = 1$, then
$(T_h x)[n]$ is a convex combination of entries of~$x$.  If
$x \in [0,1]^N$, then $0 \le x[\cdot] \le 1$, hence
\[
  0
  \;\le\;
  (T_h x)[n]
  = \sum_m h[m]\,x[n-m]
  \;\le\;
  \sum_m h[m] \cdot 1
  = 1.
\]
Since this holds for all~$n$, we have $T_h x \in [0,1]^N$.

\medskip\noindent
\textbf{(i)~$\Rightarrow$~(ii).}\;
Suppose, for contradiction, that $h[m_0] < 0$ for some~$m_0$.  Consider the
output at $n = 0$:
\[
  (T_h x)[0]
  \;=\;
  \sum_{m=0}^{N-1} h[m]\; x[-m].
\]
Define the binary witness input $\xneg \in \{0,1\}^N$ by
\[
  \xneg[-m]
  \;=\;
  \begin{cases}
    1, & h[m] < 0,\\
    0, & h[m] \ge 0.
  \end{cases}
\]
Then
\[
  (T_h \xneg)[0]
  \;=\;
  \sum_{m:\,h[m]<0} h[m]
  \;=:\;
  \Sminus(h).
\]
Since at least one term is strictly negative, $\Sminus(h) < 0$, so
$(T_h \xneg)[0] < 0$, contradicting~(i).

For the upper bound failure, define $\xpos \in \{0,1\}^N$ by
\[
  \xpos[-m]
  \;=\;
  \begin{cases}
    1, & h[m] > 0,\\
    0, & h[m] \le 0.
  \end{cases}
\]
Then
\[
  (T_h \xpos)[0]
  \;=\;
  \sum_{m:\,h[m]>0} h[m]
  \;=\;
  \sum_m h[m] - \sum_{m:\,h[m] \le 0} h[m].
\]
Under the mass constraint $\sum_m h[m] = 1$, and because
$\sum_{m:\,h[m] \le 0} h[m] \le \sum_{m:\,h[m]<0} h[m] = \Sminus(h)$, we obtain
\[
  (T_h \xpos)[0]
  \;\ge\;
  1 - \Sminus(h)
  = 1 + |\Sminus(h)|.
\]
Thus $(T_h \xpos)[0] > 1$, again contradicting~(i).  Therefore no negative
coefficient can exist, proving $h[m] \ge 0$ for all~$m$.
\end{proof}

\subsection{Certified worst-case violation bound}
\label{app:violation-bound}

The preceding proof yields a quantitative statement used throughout the
experiments.

\begin{corollaryrestate}[Guaranteed violation from negative mass]
Let $h \in \R^N$ satisfy $\sum_m h[m] = 1$.  Define
$\Sminus(h) = \sum_{m:\,h[m]<0} h[m] \le 0$.  If $\Sminus(h) < 0$, then there
exist binary inputs $\xneg, \xpos \in \{0,1\}^N$ such that
\[
  (T_h \xneg)[0] \;=\; \Sminus(h) \;<\; 0,
  \qquad
  (T_h \xpos)[0] \;\ge\; 1 - \Sminus(h) = 1 + |\Sminus(h)| \;>\; 1.
\]
In particular,
\[
  \min_n (T_h \xneg)[n] \;\le\; \Sminus(h),
  \qquad
  \max_n (T_h \xpos)[n] \;\ge\; 1 + |\Sminus(h)|.
\]
\end{corollaryrestate}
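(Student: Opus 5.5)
The plan is to read off the witnesses directly from the (i)$\Rightarrow$(ii) part of the proof of Theorem~1 (restated), since the corollary simply records, quantitatively, what that argument already computes. First, fix the output index $n=0$ and write $(T_h x)[0] = \sum_{m} h[m]\,x[-m]$, with indices taken mod~$N$. The map $m \mapsto -m$ is a bijection of $\Z_N$, so prescribing $x[-m]$ for every $m$ defines a unique vector in $\{0,1\}^N$. Set $\xneg[-m] = \mathbf{1}\{h[m]<0\}$ and $\xpos[-m] = \mathbf{1}\{h[m]>0\}$.

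Next, evaluate the two outputs. Substituting $\xneg$ kills every term with $h[m]\ge 0$, so $(T_h\xneg)[0]=\sum_{m:h[m]<0}h[m]=\Sminus(h)$, and this is strictly negative under the hypothesis $\Sminus(h)<0$. For $\xpos$, use the mass constraint to write $\sum_{m:h[m]>0}h[m] = 1-\sum_{m:h[m]\le 0}h[m]$. The terms with $h[m]=0$ contribute nothing, so the subtracted sum is exactly $\Sminus(h)$. Hence $(T_h\xpos)[0] = 1-\Sminus(h) = 1+|\Sminus(h)|$, which in particular gives the stated inequality $\ge$, and this exceeds~$1$.

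Finally, the ``in particular'' statements follow because a minimum over $n$ is at most the value at $n=0$, and a maximum is at least that value. If desired, one can also note that by circulant symmetry, shifting the witnesses by $n_0$ produces the same violation at any prescribed output index $n_0$.

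There is no real obstacle here. The only point requiring care is the index bookkeeping: the witness must be defined through the reflected index $-m$ mod~$N$ so that it aligns with the kernel entries appearing in row~$0$. One should also check that the zero coefficients are handled consistently between the definitions of $\xpos$ and $\Sminus$, which is why the inequality in the $\xpos$ bound is in fact an equality.
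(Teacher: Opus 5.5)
Your proposal is correct and matches the paper's proof: it uses the same reflected-index witnesses $\xneg[-m]=\mathbf{1}\{h[m]<0\}$ and $\xpos[-m]=\mathbf{1}\{h[m]>0\}$, substitutes them directly at output index $0$, and invokes the mass constraint for the upper witness. Your remark that zero coefficients make the $\xpos$ bound an exact equality is a harmless sharpening of the paper's inequality step.
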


\begin{proof}
The constructions of~$\xneg$ and~$\xpos$ above are explicit, and the
equalities/inequalities follow by direct substitution and the mass constraint.
\end{proof}

\section{An Analytic Explanation of the Near-Nyquist ``Blind Spot'' for the Step}
\label{app:blind-spot}

This appendix explains why, on even grids, the step response of a sharp spectral
truncation can appear perfect near Nyquist while admissibility fails by a
certified amount.

\subsection{Sharp truncation at Nyquist$-$1 differs from the identity on a
  one-dimensional subspace}
\label{app:subspace}

Let~$N$ be even and consider the discrete Fourier transform (DFT) basis.  The
unique ``Nyquist mode'' corresponds to the frequency index $k = N/2$, whose
real-space representative is the alternating vector
\[
  v[n] \;=\; (-1)^n, \qquad n = 0, \dots, N-1.
\]
A sharp spectral truncation at cutoff $K = N/2-1$ retains every Fourier mode
except the Nyquist mode~$v$.  Equivalently, the associated linear
operator~$T_{\mathrm{sharp}}$ satisfies
\[
  T_{\mathrm{sharp}} \;=\; I - \Pi_{\mathrm{Nyq}},
\]
where~$\Pi_{\mathrm{Nyq}}$ denotes the orthogonal projection onto
$\spn\{v\}$ (with respect to the standard inner product on~$\R^N$
or~$\mathbb{C}^N$).  Thus, the operator differs from the identity only by
removing the Nyquist component.

\subsection{The midpoint step can be orthogonal to the Nyquist mode}
\label{app:orthogonal}

Consider the canonical periodic midpoint step
\[
  s[n] \;=\;
  \begin{cases}
    0, & 0 \le n \le N/2-1,\\
    1, & N/2 \le n \le N-1.
  \end{cases}
\]
Its Nyquist coefficient is proportional to the inner product
$\langle s, v \rangle$:
\[
  \langle s, v \rangle
  \;=\;
  \sum_{n=0}^{N-1} s[n]\,(-1)^n
  \;=\;
  \sum_{n=N/2}^{N-1} (-1)^n.
\]
If~$N/2$ is even (equivalently~$N$ is divisible by~$4$), then the sum on the
right contains an even number of alternating~$\pm 1$ terms and equals~$0$.  In
that common case,
\[
  \Pi_{\mathrm{Nyq}} s \;=\; 0,
  \qquad\text{hence}\qquad
  T_{\mathrm{sharp}}\, s \;=\; s.
\]
Therefore the step test reports exactly zero overshoot/undershoot at
$K = N/2-1$, despite the fact that the real-space impulse response remains
sign-changing and the operator is non-admissible.

This explains the ``blind spot'' seen in the near-Nyquist regime: the step probe
fails to excite the (one-dimensional) direction where the operator differs most
clearly from an averaging operator.

\subsection{Why admissibility can still fail dramatically}
\label{app:fail-dramatically}

The preceding calculation concerns a single probe~$s$.  Admissibility is a
uniform statement over~$[0,1]^N$.  If the kernel~$h$ of~$T_{\mathrm{sharp}}$ has
negative entries, then $\Sminus(h) < 0$ and \Cref{cor:violation} produces a
binary witness~$\xneg$ such that
$(T_{\mathrm{sharp}} \xneg)[0] = \Sminus(h) < 0$, independent of what happens
on the step.

In short: the step can be exactly preserved while the operator remains provably
unsafe on other admissible inputs.

\section{Reproducibility Checklist and Artifacts}
\label{app:repro}

\subsection{Experiment configuration}

The flagship demonstration and the figure-generation scripts use:

\begin{itemize}[leftmargin=2em]
  \item \textbf{Periodic grid size:} $N = 2048$.
  \item \textbf{Operators:} Fej\'er (Ces\`aro) averaging, sharp spectral
    truncation, and a signed control truncation, all normalized to satisfy the
    mass constraint $\sum_n h[n] = 1$.
  \item \textbf{Cutoff sets:} the coarse $K$-sweep and the dense sweep reported
    in the main text and depicted in Figures~\ref{fig:hero}--\ref{fig:obs-vs-guar}.
\end{itemize}

Each run emits a \emph{specification hash} (\texttt{spec\_sha256}) for the
experiment configuration and a \emph{determinism hash}
(\texttt{determinism\_sha256}) for the resolved parameters and outputs.  These
values are recorded in the run logs and in the generated JSON artifacts.

\medskip
\noindent
\textbf{Certified parameters for the flagship run:}
$(137,\, 107,\, 103)$ (primary triple),
$N = 2048$,
$K_{\mathrm{primary}} = 511$,
$\mathrm{Nyquist}-1 = 1023$.

\subsection{Artifact list}

The released archive and repository contain:

\begin{itemize}[leftmargin=2em]
  \item A machine-readable JSON file with the numerical arrays used to render
    Figures~\ref{fig:hero}--\ref{fig:obs-vs-guar} (including~$\Sminus(h)$,
    $\min h$, the step overshoot metric, and the guaranteed violation bounds
    across~$K$).
  \item The scripts required to regenerate the figures from scratch.
  \item The PNG figure artifacts referenced by the manuscript:
    \texttt{vendored/figures/Fig1\_HERO.png},
    \texttt{vendored/figures/Fig2\_KernelTriptych.png},
    \texttt{vendored/figures/Fig3\_ObservedVsGuaranteed.png}.
\end{itemize}

\section*{Data and Code Availability}
\addcontentsline{toc}{section}{Data and Code Availability}
\label{sec:data-avail}

An archival snapshot of the Marithmetics codebase is available via Zenodo record
18689854
(\href{https://doi.org/10.5281/zenodo.18689854}{doi:10.5281/zenodo.18689854}).
The exact figure-generation demo used for this manuscript, together with
pre-generated figure files and machine-readable run metadata, is available in the
public repository at commit
\texttt{346c9f3c8fe8f8a4e2a2f505b85d385b982baa28} under:
(i)~\texttt{codepack/demos/controllers/demo-11-step-tests-do-not-certify-%
admissibility/}
and
(ii)~\texttt{vendored/figures/}.
For convenience, commit-pinned permalinks are:

\smallskip
\noindent
\url{https://github.com/public-arch/Marithmetics/tree/346c9f3c8fe8f8a4e2a2f505b85d385b982baa28/codepack/demos/controllers/demo-11-step-tests-do-not-certify-admissibility}

\smallskip
\noindent
\url{https://github.com/public-arch/Marithmetics/tree/346c9f3c8fe8f8a4e2a2f505b85d385b982baa28/vendored/figures}

\smallskip
\noindent
The certified run identifiers for the flagship figures are:

\smallskip
\noindent
\texttt{spec\_sha256} =
\texttt{04c0fac520fef120bb7f9b46156046a7962be1be471ed16dba30060d790f15df}

\noindent
\texttt{determinism\_sha256} =
\texttt{3397465388c3d68082986d8f0204b8582a898021698a51e0e02dabeab3939fea}


\end{document}